\numberwithin{equation}{section}
\DeclareFontFamily{U}{mathb}{\hyphenchar\font45}
\DeclareFontShape{U}{mathb}{m}{n}{
      <5> <6> <7> <8> <9> <10> gen * mathb
      <10.95> mathb10 <12> <14.4> <17.28> <20.74> <24.88> mathb12
      }{}
\DeclareSymbolFont{mathb}{U}{mathb}{m}{n}
\DeclareMathSymbol{\righttoleftarrow}{3}{mathb}{"FD}
\theoremstyle{plain}
\newtheorem{prop}{Proposition}[section]
\newtheorem{coro}[prop]{Corollary}
\newtheorem{lemm}[prop]{Lemma}
\theoremstyle{definition}
\newtheorem{exam}[prop]{Example}
\newcommand{\actsfromleft}{\mathrel{\reflectbox{$\righttoleftarrow$}}}
\newcommand{\actsfromright}{\righttoleftarrow}
\def\lra{\longrightarrow}
\def\cB{{\mathcal B}}
\def\BC{{\mathcal{BC}}}
\def\cO{{\mathcal O}}
\def\cS{{\mathcal S}}
\def\fA{{\mathfrak A}}
\def\fD{{\mathfrak D}}
\def\fK{{\mathfrak K}}
\def\fS{{\mathfrak S}}
\def\mp{{\mathfrak p}}
\def\fS{{\mathfrak S}}
\def\bP{{\mathbb P}}
\def\bZ{{\mathbb Z}}
\def\bN{{\mathbb N}}
\def\rH{{\mathrm H}}
\def\bF{{\mathbb F}}
\def\BC{\mathcal{BC}}
\def\Pic{\mathrm{Pic}}
\def\Aut{\mathrm{Aut}}
\def\SL{\mathrm{SL}}
\def\GL{\mathrm{GL}}
\def\PGL{\mathrm{PGL}}
\def\Hom{\mathrm{Hom}}
\def\Burn{\mathrm{Burn}}
\def\Bir{\mathrm{Bir}}
\def\lim{\mathrm{lim}}
\def\Ker{\mathrm{Ker}}
\begin{document}

\title[Equivariant geometry of linear actions]{Equivariant birational geometry of linear actions}

\author{Yuri Tschinkel}
\address{
  Courant Institute,
  251 Mercer Street,
  New York, NY 10012, USA
}

\email{tschinkel@cims.nyu.edu}

\address{Simons Foundation\\
160 Fifth Avenue\\
New York, NY 10010\\
USA}

\author{Kaiqi Yang}
\address{
  Courant Institute,
  251 Mercer Street,
  New York, NY 10012, USA
}
\email{ky994@nyu.edu}

\author{Zhijia Zhang}
\email{zhijia.zhang@cims.nyu.edu}

\date{\today}

\begin{abstract}
We study linear actions of finite groups in small dimensions, up to equivariant birationality. 
\end{abstract}

\maketitle

\section{Introduction}
\label{sect:intro}

The classification of actions of finite groups on rational surfaces, up to equivariant birationality, has a rich past and an active present. It goes back at least to 
the classical work of Bertini, Castelnuovo, Kantor, Segre, with the focus on involutions and their fixed loci, to the work of Manin, Iskovskikh, and Sarkisov, with an emphasis on the group action on the Picard group, classification of elementary birational transformations, and equivariant birational rigidity. The fundamental work of 
Dolgachev--Iskovskikh \cite{DI} summarizes and completes this vast program, to a certain extent: 
it gives a list of finite groups that can act on rational surfaces, and presents an algorithm that allows to distinguish different birational actions of a group, in many cases. 

More precisely, the equivariant Minimal Model Program (MMP) shows that an action of a finite group $G$ on a rational surface can be realized as a regular action either on a Del Pezzo surface or conic bundle over $\bP^1$. One can assume that 
the surface is {\em minimal}, i.e., no equivariant blow downs are possible. 
Actions on minimal Del Pezzo surfaces of low degree are {\em rigid}, and visible via 
induced actions on the primitive Picard lattice, i.e., as subgroups of the respective Weyl group. 

The most significant {\em ``What is left?''} \cite[Section 9]{DI} was the classification, up to birationality, of actions on Del Pezzo surfaces of high degree, e.g., linear and projectively linear actions on the projective plane.  

Recall that {\em linear}, respectively, {\em projectively linear} actions of finite groups
$G$ arise via projectivizations $\bP(V)$ of an  $(n+1)$-dimensional representation $V$ of $G$, respectively, of a central extension of $G$. In classical terminology, these are called: 
\begin{itemize}
\item {\em intransitive}: if the representation $V$ is reducible,
\item {\em transitive but imprimitive}: if the action is not intransitive, but 
there is a nontrivial normal subgroup of $G$ acting intransitively;
\item{\em primitive}: neither of the above. 
\end{itemize}
The case of {\em primitive} actions was essentially settled, via equivariant MMP, in \cite{sako}. On the other extreme, the birational classification of linear 
actions of {\em abelian} groups has been settled, in all dimensions, in \cite[Theorem 7.1]{reichsteinyoussininvariant}.
In general, the classification of regular actions on $\bP^2$, up to birationality, is still an open problem.  

The case of threefolds is much more involved. As in dimension 2, the
birational classification of linear actions on $\bP^3$ is an open problem.  Significant progress has been achieved in analyzing {\em primitive} actions  \cite{CS-finite}, \cite{CSar}, or involutions in the Cremona group $\mathrm{Cr}_3$ (see \cite{Pro-inv}).

\

New equivariant birational invariants were defined in \cite{KPT} and \cite{BnG}. The definitions assume that the ground field is of characteristic zero and contains roots of unity of order dividing the order of $G$. 
The invariants are computed on an appropriate birational model $X$ (standard form) and 
take values in the {\em Burnside group} 
$$
\Burn_n(G),
$$ 
which is defined as a quotient of a {\em symbols group}
by explicit relations. The symbols encode information about loci with nontrivial 
abelian stabilizers, the weights of the induced action in the normal bundle to these loci, as well as the induced action on the corresponding function fields, see \cite{HKTsmall} for definitions and examples. The paper \cite{KT-vector} applied this 
formalism to the study of  actions on $\bP^2$ and produced new examples of non-birational {\em intransitive} actions. 

In this paper, we work over an algebraically closed field $k$ of characteristic zero. 
We apply the formalism of Burnside groups to the study of 
linear actions in dimensions $\le 3$. We make extensive use of the 
algorithm developed in \cite{KT-vector}, 
which allows to recursively compute the class in $\Burn_n(G)$ 
of a (projectively) linear action of a finite group $G$ on $\bP^n$. We have  
implemented this algorithm in {\tt magma} and compiled tables of classes of such 
actions on $\bP^2$ and $\bP^3$, see \cite{TYZ-web}. 
Among our results are: 
\begin{itemize}
\item In dimension 2, the Burnside formalism does not allow to distinguish 
primitive actions but does yield 
many new examples of non-birational linear and projectively linear actions.
\item In dimension 3, we exhibit new types of non-birational linear actions on $\bP^3$ as well as nonlinearizable actions on smooth quadrics. 
\end{itemize}
In essence, the Burnside formalism complements birational rigidity techniques as in \cite{sako}, \cite{CS-finite}, \cite{CSar}.

Here is the roadmap of the paper:
In Section~\ref{sect:genn} we recall basic facts concerning equivariant birational geometry and relevant classical invariants used to distinguish actions up to birationality. In Section~\ref{sect:gen}, we recall the definition of the Burnside group $\Burn_n(G)$ introduced in  \cite{KPT}; this group receives birational invariants of generically free actions of a finite group $G$ on $n$-dimensional varieties.
We tabulate the groups 
in small dimensions and for small $G$, and develop new tools for 
working with these groups. In Section~\ref{sect:com-class} we explain how to compute the class 
$$
[X\actsfromright G]\in \Burn_n(G)
$$ 
of a generically free $G$-action on an $n$-dimensional variety $X$. 
In Section~\ref{sect:dim1} we apply the formalism to curves. In Section~\ref{sect:com-lin} we give examples of computations of classes of linear actions, using the algorithm in \cite{KT-vector}. In Sections~\ref{sect:dim2} and \ref{sect:3} we investigate linear actions on $\bP^2$ and $\bP^3$, providing new examples of non-birational actions, not distinguishable with previous tools. 
In Section~\ref{sect:quad} we study smooth quadrics of dimension $\le 3$. 

\

\noindent
{\bf Acknowledgments:} 
We are very grateful to I. Cheltsov and A. Kresch for their interest and comments. 
The first author was partially supported by NSF grant 2000099.

\section{Generalities}
\label{sect:genn}

We recall basic terminology and notation. We consider {\em generically free}, regular actions of finite groups $G$ on smooth projective algebraic varieties over an algebraically closed field $k$ of characteristic zero. By convention, the action is from the right, and it will be denoted by 
$$
X\actsfromright G.
$$
The induced left $G$-action on the function field $K=k(X)$ is denoted by $G\actsfromleft K$. We let
$$
X^G:=\{ \mathfrak p \in X, \mathfrak p\cdot g = \mp\} 
$$
be the set of $G$-fixed points on $X$. 

We write 
$$
X\sim_G X',
$$
if there exists a $G$-equivariant birational map $X\dashrightarrow X'$. 
This means that there exists a $G$-equivariant isomorphism of field extensions
$$
k(X)/k \stackrel{\sim}{\lra} k(X')/k.
$$
We say that $X,X'$ are {\em stably} equivariantly birational if 
$$
X\times \bP^m \sim_G X'\times \bP^m, 
$$
for some $m$, with trivial action on the second factor. Of particular interest is the study of (conjugacy classes of) finite subgroups of the {\em Cremona group}
$$
\mathrm{Cr}_n=\Bir\Aut(\bP^n),
$$
the group of birational automorpisms of projective space, and  
the study of equivariant birationalities
$$
X\sim_G \bP(V).
$$
We say that the $G$-action on $X$ is: 
\begin{itemize}
\item {\em linearizable} if $V$ is a faithful representation of $G$, i.e., the action arises from an injective homomorphism $G\to \GL(V^\vee)$. 
\item {\em projectively linearizable} 
if the $G$-action on $\bP(V)$ arises from a {\em projective}
representation $G\to \PGL_{n+1}$, i.e., 
a linear representation $\tilde{G}\to \GL(V^\vee)$ of a central extension 
$$
1\to \mu_{n+1} \to \tilde{G} \to G\to 1.
$$
\end{itemize}
Note that a linearizable action is projectively linearizable, 
but the converse need not hold. We call the corresponding actions on $\bP(V)$ {\em linear}, respectively, {\em projectively linear}. 
Projectively linear actions on $\bP^n$ with a fixed point are linear. 

Among general approaches to the (stable) linearizability problem are:
\begin{itemize}
\item {\em birational rigidity}, see, e.g., \cite{Pro-ICM}, \cite{CS},
\item {\em intermediate Jacobians}, see \cite{HT-intersect}, 
\item group cohomology, such as {\em Amitsur invariant} (see see \cite[Section 6]{blancfinite}, \cite[Theorem~2.14]{sari}) 
or 
invariance of $\rH^1(G,\Pic(X))$ under equivariant blowups of smooth projective $G$-varieties $X$, see \cite{BogPro}. 
\end{itemize}

We list technical tools that are ubiquitous in 
equivariant birational geometry:
\begin{itemize}
\item If $X$ is rationally connected and $G$ is cyclic then $X^G\neq \emptyset$.
\item If $G$ is abelian and $\pi: \tilde{X}\dashrightarrow X$ is a $G$-equivariant birational map then 
$$
X^G\neq \emptyset \quad \Leftrightarrow \quad \tilde{X}^G\neq \emptyset.
$$
\item 
$\textbf{(RY)}$: Assume that a finite {\em abelian} group $G$ acts regularly and generically freely on a smooth projective variety $X$ of dimension $n$. Let $\mathfrak p\in X^G$ be a $G$-fixed point and 
$$
(a_1,\ldots,a_n), \quad a_j\in G^\vee
$$
the collection of characters of $G$ occurring in the tangent space at $\mathfrak p$. 
Let 
$$
\det(\mathfrak p): = a_1\wedge \cdots \wedge a_n\in \wedge^n(G^\vee)
$$
be the determinant. 
Let $\pi: \tilde{X}\to X$ be a $G$-equivariant birational morphism. Then, by
\cite{reichsteinyoussininvariant}, there exists a $G$-fixed point 
$\mathfrak q\in \pi^{-1}(\mathfrak p) \subset \tilde{X}$ such that
$$
\det(\mathfrak p)=\pm \det(\mathfrak q).
$$
\item 
{\bf (No-name lemma):}
If $G$ acts generically freely on $X$ and $\mathcal E\to X$ is a $G$-vector bundle of rank $m$  then 
$$
\mathcal E\sim_G X\times \bP^m,
$$
with trivial action on the second factor.
\item {\bf (MRC):}
Let $r=r(X)$ be the dimension of the Maximal Rationally Connected (MRC) quotient of an algebraic variety $X$. This is a well-defined equivariant birational invariant, by the functoriality of MRC quotients (see, e.g., \cite[IV.5.5]{Ko}). 
\item {\bf (H1):}
Let $X$ be a smooth projective variety with a generically free, stably linearizable, action of $G$. Then, for all $H\subseteq G$, one has
$$
\rH^1(H,\Pic(X)) =0.
$$
A $G$-variety satisfying this property will be called $\mathrm{H}^1$-trivial. This is a stable birational property.
\end{itemize}

In the next sections, we discuss $G$-birational invariants introduced in \cite{KPT} and \cite{BnG}. They are based on an analysis of the geometry of subvarieties of $X$ with nontrivial stabilizers, together with the induced representation in the normal bundle, and can be viewed as a generalization of the $\textbf{(RY)}$ invariant.

\section{Equivariant Burnside groups}
\label{sect:gen}

Throughout, $G$ is a finite group and $H$ a finite abelian group.
When $H\subseteq G$ is a subgroup, we write $Z_G(H)$, respectively $N_G(H)$, for its centralizer, respectively normalizer, in $G$. 
We write 
$$
H^\vee:=\Hom(H,k^\times)
$$ 
for the group of characters of $H$.

There are three versions of symbols groups, corresponding to the kind of data we attach to loci with nontrivial stabilizers (on a standard model, see Section~\ref{sect:com-class}).  
We recall the definitions, following
\cite{KPT} and \cite{BnG}.

\subsection{Maximal stabilizers}
\label{sect:maxstab}

This version addresses (generically free, regular) actions of abelian groups $H$ on smooth projective $X$, of dimension $n$; one records the weights of $H$ in the tangent space at $H$-fixed points.    
In detail, for $n\in \bN$, let
\[ \cS_n(H), \]
be the abelian group generated by \emph{symbols}
\[ \beta=(b_1,\dots,b_n), \quad b_1,\dots,b_n\in H^\vee, \quad 
\langle b_1,\dots,b_n\rangle=H^\vee,
\]
subject to the reordering relation \\
\\
\textbf{(O)} 
$\beta=(b_1,\dots,b_n)\sim \beta'=(b'_1,\dots,b'_n)$
if there is a permutation $\sigma\in \fS_n$,
with $b'_i=b_{\sigma(i)}$ for $i=1$, $\dots$, $n$.

\

Consider the quotient 
$$
\cS_n(H)\to \cB_n(H)
$$ 
by the blow-up relation\\
\\
\textbf{(B)} 
For $\beta=(b_1,\dots,b_n)$, $n\ge 2$,
\[ 
\beta=\begin{cases}
(0,b_2,\dots,b_n),&
\text{if $b_1=b_2$}, \\
\beta_1+\beta_2,&
\text{if $b_1\ne b_2$},
\end{cases}
\]
where
\[
\beta_1:=(b_1-b_2,b_2,b_3,\dots,b_n),\qquad
\beta_2:=(b_1,b_2-b_1,b_3,\dots,b_n).
\]

\

\subsection{Combinatorial Burnside group}

This version takes into account arbitrary stabilizers for actions of general finite groups, but ignores the induced action on function fields of strata with nontrivial stabilizers. For $n\in \bN$, let
\[ \mathcal{SC}_n(G) \]
be the abelian group generated by \emph{symbols}
\begin{equation}
\label{eqn:symbo}
(H,Y,\beta), 
\end{equation}
where
\begin{itemize}
\item 
$H\subseteq G$ is an abelian subgroup (the {\em stabilizer} of the symbol),
\item $Y$ is a subgroup of $Z_G(H)/H$,
and
\item $\beta=(b_1,\ldots, b_{n-d})$, with $d\in [0,\ldots, n]$, is a sequence of {\em nontrivial} characters of $H$, generating $H^\vee$.
\end{itemize}
Symbols with $d=0$ are called {\em point symbols} and those 
with $d=n-1$ {\em divisorial symbols}.

\

Symbols \eqref{eqn:symbo} are subject to reordering and conjugation relations: \\
\\
\textbf{(O)} $(H,Y,\beta)=(H,Y,\beta')$ if $\beta\sim \beta'$, as in
Section~\ref{sect:maxstab}.\\
\\
\textbf{(C)} For all $g\in G$, 
$$
(H,Y,\beta)=(H',Y',\beta'), \quad H'=gHg^{-1}, \quad 
Y'=gYg^{-1},
$$
and the characters in $\beta'$ arise from those in $\beta$ via conjugation by $g$.

\

Consider the quotient 
$$
\mathcal{SC}_n(G)\to \mathcal{BC}_n(G)
$$ 
by the vanishing and blowup relations:

\

\noindent
\textbf{(V)} $(H,Y,\beta)=0$ when $b_1+b_2=0$.\\
\\
\textbf{(B)} 
$
(H,Y,\beta)=\Theta_1+\Theta_2,
$
where:
\[ \Theta_1:=\begin{cases}
0,&
\text{if $b_1=b_2$}, \\
(H,Y,\beta_1)+(H,Y,\beta_2),&
\text{if $b_1\ne b_2$},
\end{cases}
\]
with $\beta_1$, $\beta_2$ as above, and
\[ \Theta_2:=\begin{cases}
0,&
\text{if $b_i\in \langle b_1-b_2\rangle$ for some $i$}, \\
(\overline{H},\overline{Y},\bar\beta),&
\text{otherwise}.
\end{cases}
\]
Here, 
$$
\overline{H}:=\Ker(b_1-b_2)\subseteq H,
$$
with 
$$
H/\overline{H}\subseteq \overline{Y}\subseteq Z_G(H)/\overline{H},
$$
characterized by $\overline{Y}/(H/\overline{H})=Y$,
and $\bar\beta$ consists of restrictions of characters of $\beta$:
\[ \bar\beta:=(\bar b_2,\bar b_3,\dots). \]
The images of point symbols, respectively, divisorial symbols, will be called {\em point classes}, respectively, {\em divisorial classes}.

\subsection{Equivariant Burnside group}
\label{sect:eqrel}

The most refined version records both the action of the stabilizer in the normal bundle and the induced action on the function fields of strata. 

\

For $n\in \bN$, let
\[ \mathrm{Symb}_n(G), \]
be the abelian group generated by symbols
\begin{equation}
\label{eqn:symb}
(H, Y\actsfromleft K, \beta),
\end{equation}
where
\begin{itemize}
\item $H\subseteq G$ is an abelian subgroup,
\item $Y\subseteq Z_G(H)/H$ is a subgroup,
\item $K$ is a finitely generated extension of $k$, of transcendence degree $d\le n$, with faithful action by $Y$, and
\item $\beta=(b_1,\ldots, b_{n-d})$ is a sequence of {\em nontrivial} characters of $H$, generating $H^\vee$. 
\end{itemize}
As in the case of combinatorial Burnside groups, we call a symbol in  $\mathrm{Symb}_n(G)$ {\em divisorial} if $d=(n-1)$, i.e., $\beta=(b)$, for some generator $b$ of $H^\vee$. We call a symbol a {\em point} symbol if $d=0$. 
Generally, we call $(n-d)$ the {\em codimension} of the symbol. 

\

Symbols \eqref{eqn:symb} 
are subject to reordering and conjugation relations: \\
\\
\textbf{(O)} $(H, Y\actsfromleft K, \beta)=(H, Y\actsfromleft K, \beta')$ if $\beta\sim \beta'$. \\
\\
\textbf{(C)} $(H, Y\actsfromleft K, \beta)=(H', Y'\actsfromleft K', \beta')$ if, for some $g\in G$, we have
$H'=gHg^{-1}$, $Y'=gYg^{-1}$, there is an isomorphism $K\cong K'$, trivial on $k$, that is compatible with the respective actions, and $\beta'$ obtained from $\beta$ via conjugation by $g$.

\

We consider the quotient
$$
\mathrm{Symb}_n(G) \to \Burn_n(G)
$$
by the vanishing and blowup relations:

\

\noindent
\textbf{(V)} $(H,Y\actsfromleft K,\beta)=0$ when $b_1+b_2=0$.\\
\\
\textbf{(B)} $(H,Y\actsfromleft K,\beta)=\Theta_1+\Theta_2$, where:
\begin{align*}
\Theta_1&:=\begin{cases}
0,&
\text{if $b_1=b_2$}, \\
(H,Y\actsfromleft K,\beta_1)+(H,Y\actsfromleft K,\beta_2),&
\text{if $b_1\ne b_2$},
\end{cases} \\
\Theta_2&:=\begin{cases}
0,&
\text{if $b_i\in \langle b_1-b_2\rangle$ for some $i$}, \\
(\overline{H},\overline{Y}\actsfromleft K(x),\bar\beta),&
\text{otherwise}.
\end{cases}
\end{align*}
Here $\overline{H} := \Ker(b_1-b_2)\subset H$ and $\bar{\beta}$ is the image of characters of $\beta$ in $\overline{H}^\vee$; 
there is also a recipe to produce a $\overline{Y}$-action on $K(x)$, extending the given action of $Y$ (via the canonical homomorphism $\overline{Y}\to Y$) on $K$, see the {\bf Action construction}  in \cite[Section 2]{BnG}.

\subsection{Computations} 
\label{sect:compute}

Let $G$ be abelian. The groups $\cB_n(G)$ are defined by finitely many generators and relations and are thus effectively computable. In practice, this is doable for $n\le 4$ and $|G|<300$. 
Such computations allowed to recognize 
interesting arithmetic and combinatorial structures of $\cB_n(G)$: these groups are related to cohomology of congruence subgroups of $\GL_n(\bZ)$, they carry Hecke operators, admit multiplication and comultiplication, see \cite{KPT}, \cite{KT-arith}, \cite{KT-struct}. 
Tables for cyclic groups $C_m$ of small order
can be found in \cite[Section 5]{KPT}.

The groups $\BC_n(G)$ are also finitely generated, with finitely many relations, and thus computable. 
A structure theorem, \cite[Theorem 5.2]{TYZ}, provides simplifications in computations of $\BC_n(G)$, by reduction to {\em modified} 
$\cB_n(H)$, for {\em abelian} subgroups $H\subseteq G$. For example, for $G$ abelian, we proved in \cite{TYZ} that 
$$
\BC_n(G)=\bigoplus_{H'\subseteq G} \bigoplus_{H''\subseteq H'} \, \cB_n(H''). 
$$
We list $\mathcal{B}_2,$ $\BC_2$ and $\BC_3$ for
small groups. We start with $G:=C_m$.

{\small

\renewcommand{\arraystretch}{1.1}
    \begin{longtable}{|l|l|l|l|}
    \hline
    &&&\\[-0.45cm]
    $m\!\!$&$\cB_2(G)\!\!$&$\mathcal{BC}_2(G)\!\!$&$\mathcal{BC}_3(G)\!\!$\\\hline
    &&&\\[-0.4cm]
$2\!\!$&$0\!\!$&$0\!\!$&$0\!\!$\\\hline
&&&\\[-0.4cm]
$3\!\!$&$\bZ\!\!$&$\bZ\!\!$&$0\!\!$\\\hline
&&&\\[-0.4cm]
$4\!\!$&$\bZ\!\!$&$\bZ$\!\!&$0\!\!$\\\hline
&&&\\[-0.4cm]
$5\!\!$&$\bZ^2\!\!$&$\bZ^2\!\!$&$0\!\!$\\\hline
&&&\\[-0.4cm]
$6\!\!$&$\bZ^2\oplus\bZ/2\!\!$&$\bZ^4\oplus \bZ/2\!\!$&$0\!\!$ \\\hline
&&&\\[-0.4cm]
$7\!\!$&$\bZ^3\oplus\bZ/2\!\!$&$\bZ^3\oplus\bZ/2\!\!$&$\bZ/2\!\!$\\\hline
&&&\\[-0.4cm]
$8\!\!$&$\bZ^3\oplus\bZ/4\!\!$&$\bZ^5\oplus\bZ/4\!\!$&$\bZ/2\!\!$\\\hline
&&&\\[-0.4cm]
$9\!\!$&$\bZ^5\oplus\bZ/3\!\!$&$\bZ^7\oplus\bZ/3\!\!$&$\bZ\!\!$\\\hline 
&&&\\[-0.4cm]
$10\!\!$&$\bZ^4\oplus(\bZ/2)^2\oplus\bZ/6\!\!$&$\bZ^8\oplus(\bZ/2)^2\oplus\bZ/6\!\!$&$(\bZ/2)^2\!\!$\\\hline
&&&\\[-0.4cm]
$11\!\!$&$\bZ^6\oplus\bZ/5\!\!$&$\bZ^6\oplus\bZ/5\!\!$&$\bZ\oplus\bZ/5\!\!$\\\hline
&&&\\[-0.4cm]
$12\!\!$&$\bZ^7\oplus\bZ/8\!\!$&$\bZ^{16}\oplus(\bZ/2)^2\oplus\bZ/8\!\!$&$\bZ^2\oplus(\bZ/2)^2\!\!$\\\hline
&&&\\[-0.4cm]
$13\!\!$&$\bZ^8\oplus\bZ/7\!\!$&$\bZ^8\oplus\bZ/7\!\!$&$\bZ^2\oplus\bZ/7\!\!$\\\hline 
&&&\\[-0.4cm]
$14\!\!$&$\bZ^7\oplus(\bZ/2)^4\oplus\bZ/12\!\!$&$\bZ^{13}\oplus(\bZ/2)^6\oplus\bZ/12\!\!$&$\bZ\oplus(\bZ/2)^{6}\!\!$\\\hline
&&&\\[-0.4cm]
$15\!\!$&$\bZ^{13}\oplus\bZ/8\!\!$&$\bZ^{19}\oplus\bZ/8\!\!$&$\bZ^5\oplus\bZ/2\!\!$\\\hline 
&&&\\[-0.4cm]
$16\!\!$&$\bZ^{10}\oplus(\bZ/2)^2\oplus\bZ/16\!\!$&$\bZ^{19}\oplus(\bZ/2)^2\oplus(\bZ/4)^2\oplus\bZ/16\!\!$&$\bZ^3\oplus(\bZ/2)^7\!\!$\\\hline
\end{longtable}

}

The next table concerns $G:=C_n\oplus C_m$. 

{\small

\renewcommand{\arraystretch}{1.1}
\begin{longtable}{|l|l|l|l|}
   \hline
    &&&\\[-0.45cm]
    $(n,m)\!\!$&$\cB_2(G)\!\!$&$\mathcal{BC}_2(G)\!\!$&$\mathcal{BC}_3(G)\!\!$\\\hline
$(2,2)\!\!$&$(\bZ/2)^2\!\!$&$(\bZ/2)^2\!\!$&$0\!\!$\\\hline
&&&\\[-0.4cm]
$(2,4)\!\!$&$\bZ^2\oplus(\bZ/2)^3\!\!$&$\bZ^6\oplus(\bZ/2)^7\!\!$&$(\bZ/2)^3\!\!$\\\hline
&&&\\[-0.4cm]
$(2,6)\!\!$&$\bZ^3\oplus(\bZ/2)^4\oplus\bZ/4\!\!$&$\bZ^{20}\oplus(\bZ/2)^{14}\oplus\bZ/4\!\!$&$(\bZ/2)^{9}\!\!$\\\hline
&&&\\[-0.4cm]
$(2,8)\!\!$&$\bZ^6\oplus(\bZ/2)^6\oplus\bZ/8\!\!$&$\bZ^{30}\oplus(\bZ/2)^{18}\oplus(\bZ/4)^4\oplus\bZ/8\!\!$&$\bZ\oplus(\bZ/2)^{24}\!\!$\\\hline 
&&&\\[-0.4cm]
$(4,4)\!\!$&$\bZ^{11}\oplus\bZ/2\!\!$&$\bZ^{41}\oplus(\bZ/2)^{29}\!\!$&$\bZ^5\oplus(\bZ/2)^{31}\!\!$\\\hline
&&&\\[-0.4cm]
$(3,3)\!\!$&$\bZ^7\!\!$&$\bZ^{15}\!\!$&$\bZ^3\!\!$\\ 
\hline
\end{longtable}
}

We also record results for small nonabelian $G$.

\renewcommand{\arraystretch}{1.1}
\begin{longtable}{|c|c|c|}
\hline
$G$ & $\BC_2(G)$ & $\BC_3(G)$\\
\hline
$Q_8$ & $(\bZ/2)^3$ & 0\\
\hline
$\fD_4$ & $(\bZ/2)^3$ & $0$\\
\hline
$\fD_5$ & $(\bZ/2)^2$ & 0\\
\hline
$\fA_5$ & $(\bZ/2)^3$ & 0 \\
\hline
$\fS_5$ & $(\bZ/2)^6 \oplus \bZ/4$ & 0 \\
\hline
$\fD_6$ & $(\bZ/2)^5 \oplus \bZ/4$ & 0\\
\hline
$\fA_6$ & $(\bZ/2)^7 \oplus \bZ/4 \oplus \bZ$ & $\bZ/2\oplus \bZ$ \\
\hline
$\fS_6$ & $(\bZ/2)^{31} \oplus (\bZ/4)^3 \oplus \bZ/8$ & $(\bZ/2)^5 \oplus \bZ/4$\\
\hline
$\fA_7$ & $(\bZ/2)^{12} \oplus (\bZ/4)^3 \oplus \bZ/8 \oplus \bZ^2$ & $(\bZ/2)^3\oplus \bZ$\\
\hline
$\mathrm{PSL}_2(\mathbb F_7)$ & $(\bZ/2)^3 \oplus \bZ$ & $\bZ/2$\\
\hline
$\fD_{5} \times \fD_4$ & $(\bZ/2)^{118} \oplus \bZ/4 \oplus (\bZ/12)^{11} \oplus (\bZ/24) \oplus \bZ$ & $(\bZ/2)^{63}\oplus \bZ$\\
\hline
\end{longtable}

In contrast to $\cB_n(G)$ and $\BC_n(G)$, the computation of $\Burn_n(G)$ is more difficult.
One of the reasons is that the symbols depend on function fields, i.e., algebraic varieties, which have {\em moduli}. For example, there are 3 types of nonlinearizable involutions in the plane Cremona group $\mathrm{Cr}_2$ (de Jonqui\`eres, Geisser, Bertini), fixing curves $C$ of genus $\ge 1$, and contributing symbols
$$
\mathfrak s=(C_2,1\actsfromleft k(C), (1)) \in \Burn_2(C_2). 
$$
Since the conjugacy class of an involution in $\mathrm{Cr}_2$ is uniquely determined by $k(C)$, the symbols $\mathfrak s$ parametrize all conjugacy classes of involutions. 

In the following sections, we will discuss various approaches to working with $\Burn_n(G)$. There is a natural homomorphism 
\begin{align}\label{eq:BurntoBC}
\Burn_n(G)\to \BC_n(G),    
\end{align}
defined by forgetting the field information in each symbol (see \cite[Section 8]{KT-struct}). Note that it is {\em not} necessarily surjective. However, sometimes, this homomorphism allows to 
distinguish actions by comparing their classes 
under the homomorphism \eqref{eq:BurntoBC}, see Section \ref{sect:dim2}, \ref{sect:3} and \ref{sect:quad}.

\subsection{Tools}
\label{sect:further}

In small dimensions and for small $G$, we can arrive at simplifications 
via simple manipulations with defining relations. For reference, we list several such standard
operations with symbols, which are independent of the ambient group and will be frequently used.  

We consider symbols
\begin{equation}
\label{eqn:11}
\mathfrak s=(H, Y\actsfromleft K, \beta), \quad  
\beta=(b_1,\ldots, b_{n-d}), \quad K=k(F), 
\end{equation}
with small $H$ and $Y$.

\

\noindent
{\em Reduction to point classes:} Relation $\mathbf{(B)}$
implies that if $d\neq n-1$ and
$b_1=b_2$ then
\begin{equation}
\label{eqn:point}
\mathfrak s=
(H,Y\actsfromleft K(x), (b_2,\ldots,b_{n-d})),
\end{equation}
with trivial $Y$-action on $x$. In particular, every symbol as in \eqref{eqn:11} with 
$Y=1$ and $F=\bP^d$ can be reduced to a point symbol.

\

\noindent
{\em Vanishing:} 
Relation $\mathbf{(V)}$ implies that $\mathfrak s$
vanishes, provided
\begin{equation}
\label{eqn:vv}
\sum_{i\in I} b_i = 0\in H^\vee, \text{ for some } I\subseteq [1,\ldots, n-d].
\end{equation}

\

\noindent
{\em Cyclic stabilizers:} 

\begin{itemize}
\item $H=C_2$: 
If $\beta$ contains more than one entry, 
$\mathfrak s=0\in\Burn_n(G)$, by $\mathbf{(V)}$. 
Assume that
$$
F\sim_Y F'\times \bP^1,
$$
with trivial action on the second factor. 
By \eqref{eqn:point} and $\mathbf{(V)}$, 
$$
\mathfrak s=(C_2, Y\actsfromleft k(F'), (1,1))=0.
$$ 
\item $H=C_3$: The symbol $\mathfrak s$ vanishes, if its codimension is $\ge 3$, by \eqref{eqn:vv}. Together with $\mathbf{(B)}$ this implies
$$
(C_3, 1\actsfromleft K, \beta) = 0 \in \Burn_n, \quad \text{ for } n\ge 3.
$$
For some $G$, the symbol can be nontrivial, i.e., in $\Burn_2(C_3)$.
On the other hand, if there is a $C_6\subset G$ centralizing $H$, then it supplies additional relations, leading to additional vanishing. For example, we have 
\begin{align*}(C_3,C_2\actsfromleft k(\bP^1),(1,1))=&(C_6,1\actsfromleft k,(1,4,1))-(C_6,1\actsfromleft k,(1,3,1))\\&-(C_6,1\actsfromleft k,(3,4,1))\\
=&-(C_6,1\actsfromleft k,(1,3,1))\!-\!(C_6,1\actsfromleft k,(5,4,1))\\&-(C_6,1\actsfromleft k,(3,1,1))\\
=&-2(C_6,1\actsfromleft k,(1,3,3))=0\in\Burn_3(G).\end{align*}
Similarly, 
\begin{align*}(C_3,C_2\actsfromleft k(\bP^1),(2,2))=&(C_6,1\actsfromleft k,(2,5,5))-(C_6,1\actsfromleft k,(2,3,5))\\&\!-(C_6,1\actsfromleft k,(3,5,5))\\
=&\!-(C_6,1\actsfromleft k,(2,1,5))-(C_6,1\actsfromleft k,(5,3,5))\\&\! -(C_6,1\actsfromleft k,(3,5,5))=0\\
=&\!-2(C_6,1\actsfromleft k,(5,3,3))=0\in\Burn_3(G).
\end{align*}
\item $H:=C_4$: Consider point symbols for $n=3$. 
There are only two potentially nontrivial symbols 
\begin{equation}
\label{eqn:c4}
(C_4, 1\actsfromleft k, (1,1,1)), \quad (C_4, 1\actsfromleft k,(3,3,3)), 
\end{equation}
Using \eqref{eqn:vv}, we derive
$$
0=(C_4,1\actsfromleft k, (1,2,1))= 
(C_4,1\actsfromleft k, (3,2,1)) + (C_4,1\actsfromleft k, (1,1,1)),
$$
and thus the second term on the right vanishes. 
The same argument applies to the other symbol in 
\eqref{eqn:c4}.
\item $H=C_5$: All symbols
$$
(C_5, 1\actsfromleft k(\bP^d),\beta)\in \Burn_n(G), \quad n\ge 2,  
$$
reduce to point classes. 
Let $n=3$ and order $b_1\le b_2\le b_3$, using $\textbf{(O)}$. 
Potentially nonvanishing generators are:
$$
(C_5, 1\actsfromleft k, (i,i,i)), i=1,\ldots, 4, \quad 
(C_5, 1\actsfromleft k, (1,1,2)),
$$
and turn to relations:
$$
(C_5, 1\actsfromleft k, (1,1,2))=
(C_5, 1\actsfromleft k, (1,4,2))+
(C_5, 1\actsfromleft k, (1,1,1)).
$$
On the other hand, we have
$$
(C_5, 1\actsfromleft k, (1,1,2)) = 
(C_5, 1\actsfromleft k(\bP^1), (1,2)) = 
(C_5, 1\actsfromleft k, (1,2,2))\!=\!0.
$$
The same argument shows the vanishing of all other generators. 
\end{itemize}

To summarize, we have:

\begin{lemm}
\label{lemm:triv-point} 
Let $G$ be a finite group and $n\ge 3$.
Every point class in 
$\Burn_n(G)$,  
with stabilizer $H=C_m\subset G$ and 
$m\le 6$ is trivial. 
\end{lemm}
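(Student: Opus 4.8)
The plan is to reduce the statement to a finite check and then dispatch the surviving point symbols directly, using only $\mathbf{(O)}$, $\mathbf{(V)}$ (in the extended form \eqref{eqn:vv}), $\mathbf{(B)}$ and the reduction identity \eqref{eqn:point}. First note that a point symbol has transcendence degree $d=0$, so $K=k$ and $Y=1$; hence a point class with cyclic stabilizer is the image of some $(C_m,1\actsfromleft k,\beta)$, with $\beta=(b_1,\dots,b_n)$ a sequence of nonzero characters generating $C_m^\vee$. The cases $m\le 3$ are immediate: for $n\ge 3$, any $\ge 3$ nonzero elements of $\bZ/m$ with $m\le 3$ admit a nonempty subset summing to $0$, so \eqref{eqn:vv} applies; and the $n=3$ instances of $m=4$ and $m=5$ are already treated in Section~\ref{sect:further}. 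So it remains to handle $m\in\{4,5,6\}$ for all $n\ge 3$.

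The second step is a reduction in $n$: since the Davenport constant of $\bZ/m$ equals $m$, any sequence of at least $m$ elements of $\bZ/m$ contains a nonempty zero-sum subsequence, so $(C_m,1\actsfromleft k,\beta)$ vanishes by \eqref{eqn:vv} whenever $n\ge m$. This leaves only the pairs $(m,n)$ with $4\le m\le 6$ and $3\le n<m$, i.e.\ $(4,3)$, $(5,3)$, $(5,4)$, $(6,3)$, $(6,4)$, $(6,5)$ --- the first two being done. For each of the remaining four, a zero-sum-free generating sequence over $\bZ/m$ of length $<m$ is highly constrained (for instance, over $\bZ/5$ the only zero-sum-free length-$4$ sequences are the constant ones), so the list of point symbols surviving $\mathbf{(O)}$ and $\mathbf{(V)}$ is short and can be written out explicitly.

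It then remains to kill these finitely many survivors. Any survivor with a repeated entry can, via \eqref{eqn:point} (pass to a symbol over $k(\bP^1)$ and back), be rewritten with the repeated weight replaced by a repeat of any other weight; this already eliminates those for which some such rewriting exposes a zero partial sum, e.g.\ $(1,1,3)=(1,3,3)=0$ and $(1,1,1,2)=(1,1,2,2)=0$ for $C_6$. The remaining survivors are the constant symbols $(a,\dots,a)$ together with, for $C_6$, a few exceptional ones such as $(1,1,2)$, $(4,4,5)$, $(1,3,4)$, $(2,3,5)$. Each of these I rewrite, by a single application of $\mathbf{(B)}$ to a well-chosen pair of entries, as a $\bZ$-combination of symbols vanishing by the previous step. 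The crucial point is that the term $\Theta_2$ in $\mathbf{(B)}$ is always zero in $\Burn_n(G)$ when $m\le 6$: if $b_1-b_2$ generates $C_m^\vee$ then $\Theta_2=0$ syntactically, and otherwise $\overline H=\Ker(b_1-b_2)$ has order $\le 3$, so $\Theta_2$ is a symbol with stabilizer $C_2$ or $C_3$ (with an induced action of $C_m/\overline H$ on a rational function field), and these were already shown to vanish --- by $\mathbf{(V)}$ for $C_2$ (the restricted weights form $(1,1,\dots)$), and by $\mathbf{(V)}$ together with the identities $(C_3,C_2\actsfromleft k(\bP^1),(1,1))=(C_3,C_2\actsfromleft k(\bP^1),(2,2))=0$ of Section~\ref{sect:further} for $C_3$, where one uses $C_6=H\subseteq G$. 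Concretely one obtains, for instance, $(C_6,1\actsfromleft k,(1,1,2))=(C_6,1\actsfromleft k,(1,1,3))-(C_6,1\actsfromleft k,(1,3,4))=0$, whence $(C_6,1\actsfromleft k,(1,1,1))=(C_6,1\actsfromleft k,(1,1,2))=0$.

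The main obstacle is the bookkeeping for the non-prime moduli $m=4$ and especially $m=6$: there $\Theta_2$ cannot always be made to vanish on the nose, and one must check that every auxiliary symbol produced (with stabilizer $C_2$ or $C_3$ and nontrivial induced action) is among those already known to be zero --- which is exactly what the $C_2$- and $C_3$-stabilizer discussion and the explicit $C_6\supset C_3$ relations of Section~\ref{sect:further} are arranged to guarantee. Once that is granted, the residual list of cases is a routine, and machine-verifiable, computation with the defining relations.
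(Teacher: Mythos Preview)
Your proof is correct and follows essentially the same strategy as the paper: reduce to finitely many point symbols, then kill each by a short chain of $\mathbf{(B)}$-relations once one knows that every $\Theta_2$-term (with stabilizer $C_2$ or $C_3$ and residual $C_{m/\bar m}$-action on $k(\bP^1)$) vanishes. Your handling of $\Theta_2$ matches the paper's exactly, including the appeal to the identities $(C_3,C_2\actsfromleft k(\bP^1),(\pm1,\pm1))=0$ established just before the lemma.

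The one genuine difference is the reduction in $n$. The paper simply asserts ``It suffices to prove this for $n=3$'' and then treats only $m=6$, $n=3$. You instead invoke the Davenport constant of $\bZ/m$ to cut down to the finitely many pairs $(m,n)$ with $3\le n<m\le 6$, and then dispatch $(5,4)$, $(6,4)$, $(6,5)$ by the same mechanism (find a nearby symbol vanishing by \eqref{eqn:vv}, apply $\mathbf{(B)}$, observe that the other $\Theta_1$-summand and $\Theta_2$ vanish). This is more explicit than the paper's treatment and fills what is, at face value, an unexplained step there; in exchange, your write-up of the surviving $m=6$, $n=3$ case is sketchier than the paper's fully tabulated sequence of identities, but the sample computations you give (e.g.\ $(1,1,2)\Rightarrow(1,1,1)$ and $(1,3,4)=0$) are correct and the remaining ones follow the same template.
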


\begin{proof}
It suffices to prove this for $n=3$. 
We already dealt with $m=2,3,4,5$. 
When $m=6$, $\Theta_2$-terms in the blow-up relations come from:
$$
(C_2,C_3\actsfromleft k(\bP^1),(1,1))=0,
$$
 $$(C_3,C_2\actsfromleft k(\bP^1),(1,2))=0,
$$
$$
 (C_3,C_2\actsfromleft k(\bP^1),(\pm 1,\pm 1)).
$$
We prove that the last symbols are also zero in $\Burn_3(G)$. 
First of all, 
\begin{align*}
0& =(C_3,C_2\actsfromleft k(\bP^1),(1,2)) \\
& = 
(C_3,C_2\actsfromleft k(\bP^1),(2,2))+ 
(C_3,C_2\actsfromleft k(\bP^1),(1,1)).
\end{align*}
For compactness, for point classes, we will use the notation
$$
(b_1,b_2,b_3)=(C_6, 1\actsfromleft k(\bP^1),(b_1,b_2, b_3)).
$$
Applying $\mathbf{(B)}$, we obtain
$$
0=(1,4,1)=(3, 4, 1) + (1, 3, 1)  + (C_3,C_2\actsfromleft k(\bP^1),(1,1)).
$$ 
Similarly, 
$$
(3, 4, 1)= (3, 4, 3) + (3, 3, 1) +\Theta_2=0, 
$$
since all terms on the right vanish, by $\mathbf{(V)}$ and the fact that $$
b_3 \in \langle b_1-b_2\rangle, \quad b_3=3, b_1=1, b_2=4.
$$
We now have
\begin{equation}
\label{eqn:341}
0=(3, 4, 1)=(5,4,1)+(3,1,1).  
\end{equation}
Thus, all $\Theta_2$ terms vanish. 

Next, note that once we know that $(b_1,b_2,b_3)=0$ then the same relations, applied to negatives, yield $(-b_1,-b_2,-b_3)=0$ as well. 
Thus we need to prove the vanishing of the non-boldface symbols  in the following sequence of relations, which we apply in the given sequence; in bold we have
indicated the terms that vanish by $\textbf{(V)}$, by previous identities, or by sign change on previously obtained vanishing symbols:

\begin{align*}
\mathbf{(1,2,3)} & = \mathbf{(1,3,5)}  + (1,1,2) \\
\mathbf{(1,1,2)} & = \mathbf{(1,5,2)} + (1,1,1)  \\
\mathbf{(1,2,3)} & = \mathbf{(4,2,3)} + (1,2,2)  \\
(1,3,4) & = \mathbf{(3,3,4)} + \mathbf{(1,3,3)}   \\
(1,1,3) & = \mathbf{(1,4,3)}  + \mathbf{(1,1,2)}   \\
(2,2,3)  & = \mathbf{(2,5,3)}  + \mathbf{(2,2,1)}   \\
(1,4,4) & = \mathbf{(3,4,4)}  + \mathbf{(1,3,4)}   
\end{align*}
\end{proof}

\subsection{Incompressibles}
\label{sect:inc}

For $n=1$, there are no relations, with the exception of the conjugation relation $({\bf C})$, i.e., $\Burn_1(G)$ is the free abelian group
spanned by symbols
$$
(H, 1\actsfromleft k, (b_1)),
$$
where $H\subseteq G$ is a {\em cyclic} subgroup (up to conjugation). 

In dimensions $n\ge 2$, we call a divisorial symbol {\em incompressible} 
if it {\em does not} appear in the $\Theta_2$-term of 
any relation $\mathbf{(B)}$. 
We have 
\begin{equation}
\label{eqn:deco}
\Burn_n(G) = \Burn_n^{\rm triv}(G) \oplus \Burn_n^{\rm inc}(G) \oplus  \Burn_n^{\rm comp}(G),
\end{equation}
where 
\begin{itemize}
\item $\Burn_n^{\rm triv}(G)$ is freely spanned by symbols
$$
(1,G\actsfromleft K, ()),
$$
where $K$ is a field of transcendence degree $n$, with a generically free action of $G$; 
\item 
$\Burn_n^{\rm inc}(G)$ is freely spanned by {\em incompressible divisorial symbols}, modulo conjugation; and 
\item 
the third summand is generated by all other symbols, 
subject to relations in Section~\ref{sect:eqrel} (see \cite[Proposition 3.4]{KT-vector}).
\end{itemize}
In some examples, the presence of incompressible symbols already allows to distinguish birational types of actions, greatly simplifying the arguments (see Section~\ref{sect:dim2}).  In other examples, one has to perform computations in 
$\Burn_n^{\rm comp}(G)$.

Recall that, for $n=2$, we have
\begin{itemize}
\item {\em point} classes, i.e., $K=k$ and $\beta=(b_1,b_2)$,
\item {\em divisorial classes}:
\begin{itemize}
\item classes of rational curves, i.e., $K=k(x)$, $\beta=(b_1)$, and $Y$ cyclic,
\item classes of rational curves, with $\beta=(b_1)$, and $Y$ noncyclic,
\item classes of curves of genus $\ge 1$, i.e., those where $K=k(C)$, and $C$ is a
curve of genus $\ge 1$. 
\end{itemize}
\end{itemize}
The {\em incompressible} divisorial symbols correspond to the last two cases.

The table below  shows the structure of 
$\Burn_2^{\rm comp}(G)$ for $G=C_m$:   

\renewcommand{\arraystretch}{1.1}
\begin{longtable}{|l|l|}
\hline
$m$ & $\Burn_2^{\rm comp}(G)$ \\
\hline
 2 & $0$\\
\hline
 3 & $\bZ$\\
\hline
 4 & $\bZ^2$\\
 \hline 
5 & $\bZ^2$ \\
\hline
6 & $\bZ^6$ \\
\hline
7 & $\bZ^3 \oplus \bZ/2$ \\
\hline
8 & $\bZ^8 \oplus \bZ/2$\\
 \hline
9 & $\bZ^8 \oplus \bZ/3$ \\
\hline
10 & $\bZ^{11} \oplus \bZ/3$\\
\hline
11 & $\bZ^6 \oplus \bZ/5$\\
\hline
12 & $\bZ^{22} \oplus \bZ/4$ \\
\hline
13 & $\bZ^8 \oplus\bZ/7$\\
 \hline 
14 & $\bZ^{17} \oplus \bZ/2 \oplus \bZ/6$\\
 \hline 
15 & $\bZ^{22} \oplus \bZ/8$\\
 \hline
16 & $\bZ^{25} \oplus(\bZ/2)^2 \oplus \bZ/8$
\\
\hline
\end{longtable}


The analysis of incompressible divisorial symbols
$$
\bar{\mathfrak s}=(\bar{H}, \bar{Y}\actsfromleft k(D), (\bar{b}_1)), 
$$
in dimensions  $n\ge 3$ is more involved. We have not attempted a full classification, but can identify several types, e.g., 
\begin{itemize}
\item $D$ is not uniruled,
\item $D$ is $G$-solid, i.e., not $G$-birational to a $G$-equivariant Mori fiber space over a positive-dimensional base (see \cite{CDK} for a detailed study of  {\em toric} $G$-solid varieties in dimension $\le 3$),
\item $n=3$ and $D$ is a rational surface which is not $\bar{Y}$-equivariantly birational to a Hirzebruch surface, see \cite{DI} for a 
classification of such actions.
\end{itemize}

How to tell whether or not a symbol 
\begin{equation}
\label{eqn:sbar}
\bar{\mathfrak s}:=(\bar{H}, \bar{Y}\actsfromleft \bar{K}, \beta) \in \Burn_n(G)
\end{equation}
is incompressible, in practice? A {\em necessary} condition is that 
$$
\bar{K}\not\sim_{\bar{Y}} \bar{K'}(x), 
$$
for some function field $\bar{K}'$, with trivial action of $\bar{Y}$ on $x$; such symbols arise via blowup relations from symbols where some characters in $\beta$  
have multiplicity $\ge 2$. The next steps, after verifying this condition, are: 
\begin{enumerate}
\item List all conjugacy classes of abelian subgroups $H\subseteq G$, together with their centralizers $Z_G(H)$.
\item 
For each $H$ enumerate all nontrivial proper subgroups $H'\subsetneq H$. List all subgroups
$$
Y'\subseteq Z_G(H)/H'.
$$
\item If there is no $(H',Y')$ conjugated to $(\bar{H}, \bar{Y})$ then $\bar{\mathfrak s}$ is incompressible. 
\item 
If there is such a pair, one needs to analyze in detail 
whether or not the Action construction can produce, birationally, the given action $\bar{Y}\actsfromleft \bar{K}$.
\end{enumerate}

\begin{exam}
\label{exam:incomp}
Let $n=3$ and $G=Q_8$. There are 4 conjugacy classes of nontrivial abelian subgroups, one $C_2$, with centralizer $G$, and three $C_4$, with centralizer itself. 
We consider the {\em divisorial} symbol
$$
\bar{\mathfrak s} = (C_2, C_2\times C_2\actsfromleft \bar{K}, (1))\in \Burn_3(G),  
$$
where $\bar{K}=k(\bP^2)$ and $\bar{Y} = C_2\times C_2$ 
acts {\em linearly}, in particular, with fixed points.
Such an action is not birational to an action of $\bar{Y}$ on $\bP^1\times \bP^1$, with trivial action on the second factor. 
By Step 2 above, such $\bar{Y}=C_2\times C_2$ do 
not arise. 
\end{exam}

\subsection{MRC quotients}
\label{sect:mrc}

Another look at the key relation 
$\textbf{(B)}$ shows that 
the function field $K=k(F)$ in the symbol
$$
\mathfrak s =(H, Y\actsfromleft k(F), \beta)
$$
on the left side is the function field of a rationally connected (RC) variety iff this holds for $K(x)$ in the $\Theta_2$-term on the right side. In fact, in any given relation, all appearing terms have the
same dimension of the MRC quotient $r=r(F)$. 
This yields a direct sum decomposition
\begin{equation}
\label{eqn:rc}
\Burn_n(G) =  \Burn_n^{\rm triv}(G) \oplus \Burn_n^{\rm rc}(G) \oplus \bigoplus_{r=1}^{n-1}\Burn_n^{\rm nrc,r}(G),
\end{equation} 
where 
\begin{itemize}
\item 
$\Burn_n^{\rm triv}(G)$ is freely
spanned by symbols with $H=1$, 
\item 
$\Burn_n^{\rm rc}(G)$ is generated by symbols $\mathfrak s$ with $H\neq 1$, and fields $K=k(F)$, where 
$F$ is a rationally connected variety, and
\item $\Burn_n^{\rm nrc,r}(G)$ is generated by symbols with $H\neq 1$ and 
$K=k(F)$ the function field of a variety
whose MRC quotient has 
dimension $r$. 
\end{itemize}
Different summands in this decomposition could have nontrivial intersection with $\Burn_n^{\rm inc}(G)$, the incompressible divisorial symbols.

\subsection{$\mathrm{H}^1$-triviality}
\label{sect:h1-triv}
Further decompositions of $\Burn_n(G)$ can be obtained by realizing that relation $\mathbf{(B)}$ preserves
$$
\rH^1(Y',F), \quad Y'\subseteq Y, 
$$
where $F$ is a smooth projective model of the function field in the symbol $\mathfrak s$. 
In particular, we have
$$
\Burn_n^{\mathrm{rc}}(G) = \Burn_n^{\mathrm{rc},\mathrm{H1}=0}(G) \, \oplus \, 
\Burn_n^{\mathrm{rc},\mathrm{H1}\neq 0}(G),
$$
depending on the (non)triviality of the $\rH^1$-condition (see Section~\ref{sect:gen}). 

\begin{lemm}
\label{lemm:h1}
If $\bar{\mathfrak s}\in \Burn_3^{\mathrm{rc}}(G)$ is a compressible divisorial symbol then 
$$
\bar{\mathfrak s} \in \Burn_n^{\mathrm{rc},\mathrm{H1}=0}(G).
$$
\end{lemm}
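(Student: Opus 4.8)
The plan is to pin down the geometry underlying an arbitrary compressible divisorial symbol in $\Burn_3^{\mathrm{rc}}(G)$ and then to read off its $\rH^1$ by hand.

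\emph{Step 1: the shape of a compressible divisorial symbol.} Suppose $\bar{\mathfrak s}$ occurs in the $\Theta_2$-term of relation $\mathbf{(B)}$ applied to a symbol $(H, Y\actsfromleft K, \beta)$. Since $\bar\beta$ has one entry fewer than $\beta$ while $\trdeg_k K(x)=\trdeg_k K+1$, for $\Theta_2$ to be a divisorial symbol in $\Burn_3(G)$ one is forced to have $\beta=(b_1,b_2)$ and $\trdeg_k K=1$; write $K=k(C)$ for a smooth projective curve $C$, and recall $\bar H=\Ker(b_1-b_2)\ne 1$. By the Action construction of \cite[Section~2]{BnG}, which realizes $\Theta_2$ as the exceptional divisor of a blow-up along a smooth codimension-two stratum with function field $K$, a smooth projective model of $\bar{\mathfrak s}=(\bar H,\bar Y\actsfromleft k(C)(x),(\bar b_2))$ is the $\bP^1$-bundle
$$
\pi\colon D:=\bP_C(\cN_1\oplus\cN_2)\longrightarrow C,
$$
with $\bar Y$ acting compatibly with $\pi$ and extending the $Y$-action on $C$ through the canonical surjection $\bar Y\to Y$.

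\emph{Step 2: using the rc-hypothesis.} The assumption $\bar{\mathfrak s}\in\Burn_3^{\mathrm{rc}}(G)$ says exactly that $D$ is rationally connected, hence a rational surface; since $D$ is geometrically ruled over $C$, this forces $C\cong\bP^1$, so $D\cong\mathbb{F}_e$ for some $e\ge 0$. Now $\Pic(D)\cong\bZ^2$, and $\bar Y$ acts trivially on it: the fiber class $[\pi^{-1}(\mathrm{pt})]$ is fixed because $\pi$ is $\bar Y$-equivariant, and a complementary class is fixed as well — for $e>0$ it is the class of the unique negative section, and for $e=0$ one uses that an automorphism of $\bP^1\times\bP^1$ preserving one of the two rulings acts trivially on the Picard group. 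Hence, for every subgroup $Y'\subseteq\bar Y$,
$$
\rH^1(Y',\Pic(D))=\rH^1(Y',\bZ^2)=\Hom(Y',\bZ^2)=0
$$
since $Y'$ is finite. As $\rH^1$ of the Picard group of a smooth projective model is a $\bar Y$-equivariant birational invariant (invariance under equivariant blow-ups together with equivariant weak factorization), this is the value attached to the symbol, so $\bar{\mathfrak s}\in\Burn_3^{\mathrm{rc},\mathrm{H1}=0}(G)$.

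\emph{Main obstacle.} The delicate point is Step~1: one must invoke the Action construction precisely enough to know that the surface model of a compressible divisorial symbol is a geometrically ruled surface over the curve $C$ carrying an \emph{equivariant} ruling — not merely some $\bar Y$-conic bundle over $\bP^1$, for which $\rH^1(\bar Y,\Pic)$ can be nonzero (this is exactly the Iskovskikh-type obstruction exploited elsewhere in the paper). Once the ruled structure and the rationality of $C$ (forced by the rc-hypothesis) are in hand, the cohomology collapse is immediate; note that dropping the rc-hypothesis would allow $C$ of positive genus, $\Pic(D)$ would then contain a copy of $\Pic(C)$ on which $\bar Y$ may act nontrivially, and the conclusion would genuinely fail.
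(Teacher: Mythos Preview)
Your proof is correct and follows essentially the same route as the paper: identify that a compressible divisorial symbol in $\Burn_3$ arises via $\mathbf{(B)}$ from a symbol $\mathfrak s$ with one-dimensional $K$, use the rc-hypothesis to force $K=k(\bP^1)$, and conclude $\rH^1$-triviality. The paper shortcuts your Step~2 by invoking the invariance of $\rH^1$ under $\mathbf{(B)}$ (stated immediately before the lemma) and checking triviality on the source symbol $\mathfrak s=(H,Y\actsfromleft k(\bP^1),\beta)$, whose model is $\bP^1$, rather than working out the Hirzebruch-surface model of $\bar{\mathfrak s}$ as you do.
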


\begin{proof}
Indeed, it can only arise from a symbol
$$
\mathfrak s = (H, Y\actsfromleft k(\bP^1),\beta)
$$
which is $\rH^1$-trivial. 
\end{proof}

\section{Computing the classes}
\label{sect:com-class}

We recall the definition of the class of a generically free $G$-action on a smooth projective variety $X$. By convention, the $G$-action on $X$ is on the right, and the induced action on $K=k(X)$ is on the left. 

We assume that $X$ is in {\em standard form}, i.e., there is an open subset $U\subset X$ where the $G$-action is free, with complement $X\setminus U$ 
a normal crossings divisor such that for its every component $D$ and all $g\in G$, we have
$(D\cdot g) \cap D$ is either empty or all of $D$, see \cite[Section 7.2]{HKTsmall} for more details. Such a model of the function field $K$ can always be obtained via equivariant blowups, and every further blowup of such a model is also in standard form. One of its features is that all stabilizers are {\em abelian}. 
By definition, the class of such an action
\begin{equation}
\label{eqn:class}
[X\actsfromright G] :=\sum_H \, \sum_F (H, Y\actsfromleft k(F), \beta_F) \in \Burn_n(G)
\end{equation}
is a sum over conjugacy classes of stabilizers $H$ of maximal strata $F$ 
with these stabilizers, with the induced action of a subgroup $Y\subset Z_G(H)/H$ on the corresponding function field. In other words, the symbol records {\em one} representative of a $G$-orbit of a (maximal) stratum with stabilizer $H$: changing a component in this $G$-orbit conjugates the stabilizer by an element $g\in G$, the action on that component, and the induced action in the normal bundle to that component; this is reflected in the conjugation relation $\mathbf{(C)}$.  

The sum \eqref{eqn:class} contains a distinguished summand, 
$$
(1, G\actsfromleft k(X), ()) \in \Burn_n^{\mathrm{triv}}(G)
$$
reflecting the $G$-action on the generic point of $X$. 
Of course, there can be actions where there are no other summands in \eqref{eqn:class}, e.g., a translation action on an elliptic curve. In such cases, the Burnside group formalism provides {\em no} information about the $G$-action. On the other hand, we will exhibit many examples, where the actions can be distinguished via images of the corresponding classes under projections to 
$\Burn_n^{\mathrm{inc}}(G)$ or $\Burn_n^{\mathrm{comp}}(G)$.

We note that incompressible divisorial symbols can be read off from {\em any} equivariant birational model, even one which is not in standard form. 
It is typically a nontrivial task to find a standard model. Indeed, a linear representation $V$ of a nonabelian group $G$, and its equivariant compactification 
$\bP(1\oplus V)$, where $1$ is the trivial representation, by definition have strata with 
nonabelian stabilizers, e.g., the origin of $V$; and one may have to blow up several times to reach abelian stabilizers. 
In \cite{KT-vector} it was shown that a $G$-equivariant version of De Concini--Procesi compactifications of subspace arrangements provides a standard model for the $G$-action on $\bP(V)$; here the relevant subspaces in $\bP(V)$ 
correspond to loci with nontrivial stabilizers. We illustrate this in Section~\ref{sect:com-lin}. 
A similar algorithm for actions on toric varieties was presented in \cite{Burntoric}.

Next, assume we are given different $G$-actions, presented on $X$ and $X'$, which are both in standard form. 
To distinguish these, one expresses the classes
as in \eqref{eqn:class}, and 
considers the projection of the difference
$$
[X\actsfromright G]-[X'\actsfromright G]
$$
to 
$$
\Burn_n^{\mathrm{inc}}(G).
$$
Since there are no blowup relations between symbols in that group, it is easy to see whether or not this difference vanishes; 
see Corollary~\ref{coro:incbir}. 

If the difference does vanish in this group, we can consider projections to 
other direct summands introduced in Sections~\ref{sect:inc}, 
\ref{sect:mrc}, and \ref{sect:h1-triv} 
$$
\Burn_n^{\mathrm{comp}}(G), \quad \Burn_n^{\mathrm{rc}}(G), \ldots
$$
As mentioned in Section~\ref{sect:gen}, these groups are harder to compute, in general. One of the main difficulties is that one has to keep track of infinitely many generating symbols, and of relations that are implied by 
(often nontrivial) stable equivariant birationalities. For example, by the No-name Lemma, any two faithful $G$-representations are stably equivariantly birational, but not necessarily equivariantly birational. Further examples of such stable equivariant birationalities can be found in \cite{HT-torsor}. In some cases, we are able to overcome this intrinsic difficulty by passing to the combinatorial Burnside group $\BC_n(G)$, via \eqref{eq:BurntoBC}. 
We have implemented algorithms checking nonvanishing of any given class in $\BC_n(G)$, for all $n\ge 2$; however, these are practical only for small $n$.   

In the following sections, we will apply this machinery to 
\begin{itemize}
\item (projectively) linear actions on $\bP^n$, with $G\subset \PGL_{n+1}$, $n\le 3$,
\item smooth quadric hypersurfaces
$X\subset \mathbb{P}^n$, $n\le 4$. 
\end{itemize}

\section{Linear actions in dimension one}
\label{sect:dim1}

We recall the well-known list of finite 
$G\subset\PGL_2$:
$$
   C_m, \mathfrak{D}_{m}, \mathfrak{A}_4,\mathfrak{S}_4,\mathfrak{A}_5,
$$
where $C_m$ is the cyclic group of order $m$ and $\fD_m$ is the dihedral group of order $2m$. 
The corresponding actions on $\bP^1$ are linear if and only if 
$G$ is cyclic, or dihedral with $m$ odd. 

 The classification of birational actions on $\bP^1$ is straightforward: two $G$-actions on $\bP^1$ are equivariantly birational if and only if the corresponding representations $V$ are projectively equivalent, i.e., conjugated in $\PGL_2$. In detail:
\begin{itemize}
    \item $G=C_m$: the action arises via
      a representations of the form $\bP(1 \oplus \epsilon)$, where $\epsilon$ is a primitive character of $G$; given $\epsilon, \epsilon'$, birationality of the corresponding $G$-actions holds
     if and only if 
     $
     \epsilon=\pm\epsilon'.
     $
    \item $G=\fD_m$: when $m$ is odd, $G$ acts on $\bP^1$ via a faithful two-dimensional representations of $\fD_m$; when $m$ is even, $G$ acts via a faithful two-dimensional representations of $\fD_{2m}$. Two such actions are birational if and only if their restrictions to the subgroup $C_m\subseteq\fD_m$ induce birational actions of $C_m$ on $\bP^1$.
    \item $G=\fA_4$: the actions arise from faithful two-dimensional representations of $\SL_2(\mathbb F_3)$, all of which are projectively equivalent. So $\fA_4$ admits a unique action on $\bP^1$.
    \item $G=\fS_4$: the actions arise from faithful two dimensional representations of $\GL_2(\mathbb F_3)$, all of which are projectively equivalent. So $\fS_4$ also admits a unique action on $\bP^1$.
    \item $G=\fA_5$: the actions arise from faithful two-dimensional representations of $\SL_2(\mathbb F_5)$. There are two such representations, inducing two non-isomorphic actions of $\fA_5$ on $\bP^1$ after projectivization. So $\fA_5$ admits two non-birational actions.  
\end{itemize}
Note that in dimension 1, non-birational actions of cyclic groups can be distinguished by the Reichstein-Youssin invariant $\textbf{(RY)}$ \cite{reichsteinyoussininvariant}:
when $C_m$ acts on $\bP^1$ via a character $\chi$, 
the action is determined by $\pm \chi$. 

In applications to nonabelian groups, we can consider determinants of actions upon restrictions to their abelian subgroups, e.g., for $G$ dihedral. For $G=\fA_5$, the two non-birational actions can also be distinguished already via restriction to $C_5\subset G$: in one case the weights at the fixed points are $(1)$ and $(4)$ and in the other case $(2)$ and $(3)$.

\begin{prop}
\label{bir-p1}
The birational type of the action of a finite group $G$ on $\bP^1$ is uniquely determined by 
$$
[\bP^1\actsfromright G] \in \Burn_1(G). 
$$
\end{prop}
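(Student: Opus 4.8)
The plan is to show that the class $[\bP^1 \actsfromright G] \in \Burn_1(G)$ recovers enough information to pin down the projective equivalence class of the representation $V$, which by the discussion preceding the proposition is exactly the $G$-birational type of the action on $\bP^1$. Recall from Section~\ref{sect:inc} that $\Burn_1(G)$ is the \emph{free} abelian group on symbols $(H, 1\actsfromleft k, (b_1))$ for cyclic $H\subseteq G$ taken up to conjugation, with no relations other than $({\bf C})$. In dimension $1$ the class \eqref{eqn:class} is
\[
[\bP^1\actsfromright G] = (1, G\actsfromleft k(\bP^1), ()) + \sum_{\mathfrak p} (H_{\mathfrak p}, 1\actsfromleft k, (b_{\mathfrak p})),
\]
the sum running over $G$-orbits of points $\mathfrak p\in\bP^1$ with nontrivial (necessarily cyclic) stabilizer $H_{\mathfrak p}$, where $b_{\mathfrak p}$ is the weight of $H_{\mathfrak p}$ on the tangent line $T_{\mathfrak p}\bP^1$. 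Since the symbols are linearly independent in $\Burn_1(G)$, the class literally remembers this entire unordered collection of pairs $(H_{\mathfrak p}, b_{\mathfrak p})$, up to simultaneous conjugation.

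The main step is then purely representation-theoretic: one must check, going through the list $C_m, \fD_m, \fA_4, \fS_4, \fA_5$ of finite subgroups of $\PGL_2$, that the stabilizer data above determines the action up to conjugacy in $\PGL_2$. For $G=C_m$ acting via $\bP(1\oplus\epsilon)$ there are exactly two fixed points $0,\infty$, with tangent weights $\epsilon$ and $-\epsilon$; so the class records the unordered pair $\{\epsilon,-\epsilon\}$, which by the cited criterion is a complete invariant. For $G=\fD_m$ the point stabilizers and their weights determine the induced action of $C_m\subseteq\fD_m$ (the two fixed points of the cyclic rotation subgroup form one $\fD_m$-orbit carrying the weight $\pm\epsilon$), and by the dihedral criterion this suffices. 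For $\fA_4,\fS_4$ there is nothing to prove since the action is unique. For $\fA_5$ the two actions are separated already by the restriction to $C_5$: in one the tangent weights at the two $C_5$-fixed-point orbits are $1$ and $4$, in the other $2$ and $3$ --- and these $C_5$-subsymbols appear, with their weights, in $[\bP^1\actsfromright G]$, hence the class distinguishes the two.

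I expect the only real work to be the bookkeeping for the dihedral case, where one must verify that the collection of stabilizer-weight pairs (for the cyclic subgroup $C_m$, for the order-$2$ reflections, and for the relevant $2$-element or $4$-element stabilizers depending on the parity of $m$) does faithfully encode the restriction to $C_m$ up to sign. This is a finite check using that the two points fixed by the rotation subgroup $C_m$ are interchanged by every reflection, so they constitute a single $G$-orbit whose symbol is $(C_m, 1\actsfromleft k, (\epsilon))$ with $\epsilon$ well-defined up to $\pm1$ and conjugation, matching exactly the invariant $\pm\epsilon$ from the dihedral criterion. Once this is in hand, freeness of $\Burn_1(G)$ converts "the class is the same" into "the stabilizer data is the same," and the case analysis finishes the proof.
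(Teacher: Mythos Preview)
Your proposal is correct and follows essentially the same approach as the paper: the paper does not give a separate proof but presents the proposition as a summary of the preceding case-by-case discussion (cyclic, dihedral, $\fA_4$, $\fS_4$, $\fA_5$), noting in each case that the weights at fixed points of cyclic stabilizers distinguish the actions --- exactly the stabilizer data you extract from the symbols in $\Burn_1(G)$. Your write-up is in fact more explicit than the paper about how this data is read off from the class and why freeness of $\Burn_1(G)$ makes the argument go through.
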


\section{Computing the classes of linear actions}
\label{sect:com-lin}

The computation of classes in the Burnside group of (projectively) linear actions in dimensions $\ge 2$ is more involved. 
Given a faithful linear representation $G\to\GL(V^\vee)$
 we obtain a faithful projective representation $G/C\to\PGL(V^\vee)$, 
 where $C\subset G$ is 
the maximal (cyclic) subgroup acting via scalar matrices. An algorithm to compute the class
$$
[\bP(V) \actsfromright G/C] \in \Burn_n(G/C)
$$
of the induced action of $G/C$ on $\bP^n$ was developed in \cite{KT-vector}, and implemented in \cite{TYZ-web}.
It is based on an equivariant version of the De Concini--Procesi approach to wonderful compactifications of subspace arrangements, which provides a systematic way of turning any given projectively linear action into a standard form. 
We note that 
\begin{itemize}
\item all symbols produced and appearing as summands in 
$$
[\bP(V)\actsfromright G]=\sum_{H} \, \sum_F (H, Y\actsfromleft k(F), \beta_F), 
$$
are in 
$$
\Burn_n^{\mathrm{rc}}(G),
$$
see \eqref{eqn:rc}, and 
\item 
all actions $Y\actsfromleft k(F)$ are 
equivariantly birational to products of
projectively linear actions on projective spaces, without
permutation of the factors (see Corollary~\ref{coro:yacts}). 
\end{itemize}

We explain the main ideas below, 
supplemented with two examples (our notation follows the one in \cite{KT-vector}). 
First, consider pairs 
\begin{equation}
\label{eqn:gamma}
(\Gamma, \epsilon),\quad C\subseteq\Gamma\subseteq G,\quad \epsilon\in\Hom(\Gamma, k^\times),
\end{equation}
where $\Gamma$ is the generic stabilizer group of some one-dimensional subspace $\ell\subset V$ and $\epsilon$ is the character of $\Gamma$ given by its action on $\ell$. Then $\Gamma/C$ stabilizes the point $\bP(\ell)\in \bP(V)$. The set 
$$
\bar{\mathcal{L}}=\bar{\mathcal{L}}(V):=\{\text{pairs }(\Gamma, \epsilon) \text{ as above}\}\cup\{\infty\}
$$
carries information about the subspace arrangement. In particular, we associate to every pair $(\Gamma, \epsilon)\in \bar{\mathcal{L}}$ the subspace 
$$
V_{\Gamma, \epsilon}:=\{v\in V \mid v\cdot g=\epsilon(g)v,\text{ for all }g\in \Gamma\}.
$$
The De Concini--Procesi model $\bP(V)_{\bar{\mathcal L}}$ is defined as the closure of the image of the natural map
$$
\bP(V)^\circ\to\bP(V)\times \prod_{\substack{(\Gamma, \epsilon)\in\bar{\mathcal L}\\\Gamma\ne C}}\bP(V/V_{\Gamma, \epsilon}),
$$
where the $\bP(V)^\circ$ is the complement in $\bP(V)$ of the union of all proper subspaces of the form $\bP(V_{\Gamma,\epsilon})$. The natural projection 
$$
\bP(V)_{\bar{\mathcal L}}\to\bP(V)
$$
is an isomorphism on $\bP(V)^\circ$, whose complement in $\bP(V)_{\bar{\mathcal L}}$ is a normal crossings divisor. It is shown in \cite[Proposition 7.2]{KT-vector} that 
the $G$-action on $\bP(V)_{\bar{\mathcal L}}$ is in standard form with respect to this divisor.
We now describe the main steps of the algorithm.

\

\noindent\textbf{Input.} A faithful linear representation $G\to\GL(V^\vee)$.

\

\noindent\textbf{Step 1.} Find $C$ and $\bar{\mathcal L}=\bar{\mathcal L}(V)$, i.e., all possible pairs $(\Gamma,\epsilon)$ as in \eqref{eqn:gamma}. 

\

\noindent\textbf{Step 2.} Find all chains of subspaces, up to conjugation by $G$, 
$$
0\subsetneq V_1\subsetneq V_2\subsetneq\cdots\subsetneq V_t\subsetneq V
$$
such that
\begin{itemize}
\item $V_i=V_{\Gamma^i,\epsilon}$ for some pair $(\Gamma^i,\epsilon)\in\bar{\mathcal L}$ with $\Gamma^i\ne C$, for every $i=1,\ldots, t$ and a common character $\epsilon$, 
\item $\Gamma^i$ is the (maximal) stabilizer group of $V_i$.
\end{itemize}
Associated with each chain of subspaces is a chain of stabilizer groups, 
$$
\Lambda:=\Gamma^1\supsetneq \Gamma^2\supsetneq\cdots\supsetneq\Gamma^t,
$$ and a character $\epsilon$ of $\Gamma^1$.

\

\noindent\textbf{Step 3.} For each conjugacy class of chains of subspaces $V_1\subsetneq \cdots\subsetneq V_t$ and the corresponding chain of stabilizers $\Lambda$, find 
\begin{itemize}
    \item $N_G(\Lambda)\subseteq G$, the intersection of normalizers of $\Gamma^i$ in $G$ which stabilize $\epsilon$, this is the stabilizer of $\Lambda.$
    \item $\Delta_\Lambda^t$, the maximal subgroup of $N_G(\Lambda)$ acting via scalars on all $V_{i+1}/V_i$.  
\end{itemize}
The input representation induces a faithful representation of $N_G(\Lambda)$ on 
$$
V_1^{\vee}\times (V_2/V_1)^{\vee}\times (V_3/V_2)^{\vee}\times \cdots \times (V/V_t)^{\vee}, 
$$
where $\Delta_\Lambda^t$ acts via scalars on each factor; 
we record characters $\epsilon^i$ of $\Delta_\Lambda^t$ on $V_{i+1}/V_i,
$ $i=0,\ldots,t$.  By convention, $V_0=0$ and $V_{t+1}=V$.\\

\noindent\textbf{Step 4.} For each conjugacy class of chains, compute an {\em{intermediate class}} 
$$
[\bP(V_1)\times\bP(V_2/V_1)\times\ldots\times\bP(V/V_t)\actsfromright N_G(\Lambda)]_{(\cO(-1))}
$$ 
of the induced action of $N_G(\Lambda)$, 
with respect to $(\cO(-1))$, a sequence of line bundles 
$$
\cO_{\bP(V_1)}(-1),\cO_{\bP(V_1)}(1)\otimes\cO_{\bP(V_2/V_1)}(-1), \cO_{\bP(V_2/V_1)}(1)\otimes\cO_{\bP(V_3/V_2)}(-1), \ldots .
$$
This intermediate class takes values in
$$
\Burn_{n,\{0,\ldots,t\}}(N_G(\Lambda), \Delta_\Lambda^t),
$$
the equivariant indexed Burnside group with respect to line bundles $(\cO(-1))$, defined in \cite[Section 4 and Section 5]{KT-vector}.
Since the De Concini--Procesi model satisfies the conditions in \cite[Lemma 5.1]{KT-vector}, we can compute the intermediate class by \cite[Definition 5.3]{KT-vector}. 
\\

\noindent\textbf{Step 5.} 
A recursive formula \cite[Proposition 8.3 and Theorem 8.4]{KT-vector} allows to compute the class
$$
[\bP(V)\actsfromright G]_{(\cO_{\bP(V)}(-1))}\in\Burn_{n,\{0\}}(G,C)
$$
using all intermediate classes of chains found in Step 2. Apply this recursion to obtain this class,
taking values in the equivariant indexed Burnside group with respect to the line bundles $(\cO_{\bP(V)}(-1))$.

\

\noindent\textbf{Step 6.} Apply 
the map
    $$
    \eta_{\{0\}}:\Burn_{n,\{0\}}(G,C)\to\Burn_n({G/C}),
    $$
    defined by
    $$
    (C\subseteq H', Z'\actsfromleft K,\beta, \gamma)\mapsto(H'/C, Z'\actsfromleft K, \beta).
    $$
By \cite[Theorem 8.5]{KT-vector}, we have
$$
[\bP(V)\actsfromright G/C]=\eta_{\{0\}}\left([\bP(V)\actsfromright G]_{(\cO_{\bP(V)}(-1))}\right).
$$

\

\noindent\textbf{Output.} The class 
$$
[\bP(V)\actsfromright G/C]\in\Burn_n(G/C)
$$
is presented as a finite sum of symbols in $\mathrm{Symb}_n(G)$.\\

\

As already noted, an important observation is:

\begin{coro}
\label{coro:yacts}
Every symbol $\mathfrak s$ 
appearing as a summand in the class
$$
[\bP(V)\actsfromright G] \in \Burn_n(G),
$$
via the algorithm from \cite{BnG} is of the shape
$$
\mathfrak s = (H, Y\actsfromleft k(F), \beta),
$$
where
\begin{itemize}
\item 
$F$ is birational to $\prod_j \bP(W_j)$, 
\item $Y\subseteq Z_G(H)/H$ acts without interchanging the factors, and
\item the action on each factor is (birational) to a (projectively) linear action.
\end{itemize}
In particular,  
$$
[\bP(V)\actsfromright G] \in \Burn_n^{\mathrm{rc},\mathrm{H1}=0}(G),
$$
(see Section~\ref{sect:h1-triv}). 
\end{coro}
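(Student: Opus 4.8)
The plan is to trace through the six-step algorithm described above and verify that every symbol it produces has the asserted shape; essentially all the work has already been done in \cite{KT-vector}, and the present statement is a repackaging of its output. First I would recall from Step~3 that, for each conjugacy class of chains $0\subsetneq V_1\subsetneq\cdots\subsetneq V_t\subsetneq V$ with stabilizer chain $\Lambda$, the group $N_G(\Lambda)$ acts faithfully on the product of duals $V_1^\vee\times(V_2/V_1)^\vee\times\cdots\times(V/V_t)^\vee$, hence projectively on $\bP(V_1)\times\bP(V_2/V_1)\times\cdots\times\bP(V/V_t)$, without ever permuting the factors (the factors are canonically ordered by the chain, and elements of $N_G(\Lambda)$ preserve the flag by definition, so they act on each graded piece separately). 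Thus the intermediate class of Step~4 is a sum of symbols whose function-field part carries an action of a subquotient of $N_G(\Lambda)$ on (an open subset of) such an ordered product of projective spaces, with each factor action induced by a linear representation of that subquotient. This already exhibits the shape $F\sim\prod_j\bP(W_j)$ with $Y$ not interchanging factors and each factor (projectively) linear.

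Next I would observe that the recursion of Step~5 (\cite[Proposition 8.3 and Theorem 8.4]{KT-vector}) assembles $[\bP(V)\actsfromright G]_{(\cO_{\bP(V)}(-1))}$ from these intermediate classes without introducing any new function fields: it only reindexes, adds line-bundle data, and takes $\bZ$-linear combinations of the symbols already produced. Likewise the map $\eta_{\{0\}}$ of Step~6 merely forgets the line-bundle component $\gamma$ and quotients the stabilizer by $C$, leaving the field $K$ and its $Y$-action untouched. Consequently every symbol in the final output $[\bP(V)\actsfromright G/C]\in\Burn_n(G/C)$ inherits the structure established for the intermediate classes, which is exactly the content of the three bullet points in the statement. (Here one should be a little careful that passing to the De Concini--Procesi boundary, i.e. restricting to a boundary stratum, replaces a $\bP(W_j)$ by a blow-up of it along a subspace arrangement, but such a blow-up is again equivariantly birational to the same product of projective spaces with the same factorwise linear action; this is precisely why the conclusion is phrased "$F$ is birational to $\prod_j\bP(W_j)$''.)

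For the final assertion, $[\bP(V)\actsfromright G]\in\Burn_n^{\mathrm{rc},\mathrm{H1}=0}(G)$, I would argue in two parts. Rational connectedness of the function field of each symbol is immediate: a finite product of projective spaces is rational, hence rationally connected, and this is a birational invariant, so every symbol lies in $\Burn_n^{\mathrm{rc}}(G)$ in the sense of \eqref{eqn:rc}. For the $\mathrm{H}^1$-triviality, recall from Section~\ref{sect:h1-triv} that the relevant condition is $\rH^1(Y',F)=0$ for all $Y'\subseteq Y$, where $F$ is a smooth projective model of the field; since $F\sim_Y\prod_j\bP(W_j)$ with $Y$ acting without interchanging factors, each $Y'\subseteq Y$ acts linearly (projectively linearly) on each factor, so $\bP(W_j)$ is $Y'$-equivariantly a projective bundle over a point (or, more to the point, $\Pic$ is a permutation module with trivial $\rH^1$), and a K\"unneth argument for $\Pic(\prod_j\bP(W_j))=\bigoplus_j\bZ$ with trivial (unpermuted) $Y'$-action gives $\rH^1(Y',\Pic)=0$; alternatively invoke Corollary~\ref{coro:yacts}'s own reference that (projectively) linear actions are $\mathrm{H}^1$-trivial by (H1) in Section~\ref{sect:genn}. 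Hence every summand lies in $\Burn_n^{\mathrm{rc},\mathrm{H1}=0}(G)$, and therefore so does the whole class.

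The main obstacle is the bookkeeping in the previous paragraph's parenthetical remark: one must check that restriction to boundary strata of the De Concini--Procesi model does not break the "product of projective spaces, factorwise linear, no permutation'' structure. This follows from the explicit local description of these strata in \cite[Section 7]{KT-vector} (the normal directions to a boundary divisor are again governed by the subspace arrangement, and the stabilizer subquotients act on them through linear characters of graded pieces), but verifying it carefully is the one place where genuine content from \cite{KT-vector} is needed rather than formal manipulation.
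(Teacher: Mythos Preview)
Your proposal is correct and matches the paper's approach: the paper gives no explicit proof of this corollary at all, treating it as an immediate consequence of the algorithm description that precedes it (the statement is introduced by ``As already noted, an important observation is:''), and your write-up is a faithful unpacking of that implicit argument---tracing the product-of-projective-spaces structure through Steps~3--6, then reading off rational connectedness and $\rH^1$-triviality from the fact that $Y$ acts factorwise linearly on $\prod_j\bP(W_j)$ without permutation.
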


An example computation, for $G=\fS_4$, acting on $\bP^2=\bP(V)$, where $V$ is the standard 3-dimensional representation of $\fS_4$, can be found in \cite[Section 9]{KT-vector}. 
Here, we provide new examples, in dimensions 2 and 3. 
\begin{exam}
Let $G=C_3\times\fD_5$ acting on $\bP^2=\bP(1\oplus V_{\epsilon})$;
here 
$$
V_{\epsilon}:=\epsilon\otimes V
$$
is the twist by a nontrivial character of $C_3$ of
the standard 2-dimensional representation of $\fD_5$, with generators acting via
$$
\begin{pmatrix}
\zeta_5&0\\
0&\zeta^{-1}_5
\end{pmatrix},
\begin{pmatrix}
0&1\\1&0
\end{pmatrix}.
$$
We tabulate the relevant information for conjugacy classes of chains of stabilizer groups from Steps 1, 2 and 3.
\begin{center}
    \begin{tabular}{c|c|c|c|c}
    $t$ & $\Lambda$ & $N_G(\Lambda)$ & $\Delta_{\Lambda}^t$ & $\epsilon^i$\\
    \hline
     1 &  $C_3\times\fD_{5}$ & $C_3\times\fD_{5}$ & $C_3$ & $0$ \\
     1 &  $C_{15}$ & $C_{15}$ & $\mathrm{trivial}$ & $-$ \\
    1 &  $C_6$ & $C_6$ & $C_2$ & $1$ \\
    1 &  $C_6$ & $C_6$ & $\mathrm{trivial}$ & $-$ \\
    1 & $C_3$ & $C_3\times\fD_5$ & $C_3$ & $1$ \\
    1 & $C_2$ & $C_6$ & $C_2$ & $0$ \\
     2 &  $C_3\times\fD_{5}\supset C_2$ & $C_6$ & $C_6$ & $0,4$ \\
     2 &  $C_{15}\supset C_3$ & $C_{15}$ & $C_{15}$ & $4,1$ \\
    2 &  $C_6\supset C_3$ & $C_6$ & $C_6$ & $1,4$ \\
        2 &  $C_6\supset C_3$ & $C_6$ & $C_6$ & $4,1$ \\
    2 &  $C_6\supset C_2$ & $C_6$ & $C_6$ & $4,0$ \\
    \end{tabular}
\end{center}
Each chain $\Lambda$ contributes to  $[\bP^2\actsfromright G]$ via its intermediate class, obtained in Step 4. 
We record these classes:

\begin{itemize}
    \item $\Lambda=C_3\times\fD_5$:
    \begin{align*}
        &(C_3\subseteq C_3, \fD_5\actsfromleft k(\bP^1), (), (0,2))+(C_3\subseteq C_6, 1\actsfromleft k, (3), (0,2))\\&+(C_3\subseteq C_6, 1\actsfromleft k, (3), (0,5))+(C_3\subseteq C_{15}, 1\actsfromleft k, (9), (0,8))\\
        &\qquad\quad\in\Burn_{3,\{0,1\}}(C_3\times\fD_5,C_3)
    \end{align*}
    \item $\Lambda=C_{15}$:
    \begin{align*}
        &(1\subseteq 1, C_{15}\actsfromleft k(\bP^1), (), (0,0))+(1\subseteq C_{15}, 1\actsfromleft k, (7), (13,2))\\&+(1\subseteq C_{15}, 1\actsfromleft k, (8), (13,9))\in\Burn_{3,\{0,1\}}(C_{15},1)
    \end{align*}
    \item $\Lambda=C_{6}$ with $\Delta_\Lambda^t=1$: \begin{align*}
        &(1\subseteq 1, C_{6}\actsfromleft k(\bP^1), (), (0,0))+(1\subseteq C_{6}, 1\actsfromleft k, (1), (2,3))\\&+(1\subseteq C_{6}, 1\actsfromleft k, (5), (2,4))\in\Burn_{3,\{0,1\}}(C_{6},1)
    \end{align*}
     \item $\Lambda=C_{6}$ with $\Delta_\Lambda^t=C_2$:
     \begin{align*}
        &(C_2\subseteq C_2, C_{3}\actsfromleft k(\bP^1), (), (1,1))+(C_2\subseteq C_{6}, 1\actsfromleft k, (4), (5,3))\\&+(C_2\subseteq C_{6}, 1\actsfromleft k, (2), (5,1))\in\Burn_{3,\{0,1\}}(C_{6},C_2)
    \end{align*}
     \item $\Lambda=C_{3}$:
     \begin{align*}
        &(C_3\subseteq C_3, \fD_5\actsfromleft k(\bP^1), (), (2,1))+(C_3\subseteq C_{6}, 1\actsfromleft k, (3), (2,4))\\&+(C_3\subseteq C_{6}, 1\actsfromleft k, (3), (5,1))+(C_3\subseteq C_{15}, 1\actsfromleft k, (9), (8,7))\\
        &\qquad\quad\in\Burn_{3,\{0,1\}}(C_{3},C_3).
    \end{align*}
\end{itemize}
Our algorithm records the action on function fields in each symbol, e.g., the 
action of $\fD_5$ on $k(\bP^1)$ in the last expression, but we omit it from the notation. 

When $t=2$, each graded piece is a one-dimensional vector space, with $N_G(\Lambda)$ acting via scalars. We will obtain classes
$$(N_G(\Lambda)\subseteq N_G(\Lambda),1\actsfromleft k,(),(\epsilon,\epsilon^1-\epsilon,\epsilon^2-\epsilon^1)).$$ 
Then we use the recursion in Step 5 to compute 
$$
[\bP(V)\actsfromright G]_{(\cO_{\bP(V)}(-1))}\in\Burn_{n,\{0\}}(G,C).
$$
In this example, $G$ acts generically freely on $\bP^2$, so that $C=1$. After applying the map $\eta_{\{0\}}$ in Step 6 and cancellations by relations, we have
\begin{align*}
   [\bP(V)\actsfromright G]&=(1, G\actsfromleft k(\bP^2),())+2( C_{2},C_{3}\actsfromleft k(\bP^1),(1) )\\
   &+(C_3,\mathfrak D_5\actsfromleft k(\bP^1),(2) )+(C_3,\mathfrak D_5\actsfromleft k(\bP^1),(1) )\\&+( C_{6},1\actsfromleft k,(3,2) )+( C_{6},1\actsfromleft k,(3,4) )\\&+( C_{6},1\actsfromleft k,(3,5) )+( C_{6},1\actsfromleft k,(2,1)\\&+( C_{6},1\actsfromleft k,(3,1) )+( C_{6},1\actsfromleft k,(4,5))\\&+(C_{15},1\actsfromleft k,(1,11) )+(C_{15},1\actsfromleft k,(3,11))\\&+(C_{15},1\actsfromleft k,(12,4)).
  \end{align*}
  
There is an alternative method to compute the class  $[\bP(V)\actsfromright G]$ \cite[Section 5]{KT-struct}: 
First, consider the action of $\fD_5$ on $\bP^1$ via its two-dimensional representation $V$.
Let $L_1$ be $\cO_{\bP^1}(1)$ twisted by the nontrivial character $\epsilon$ of $C_3$, and $L_0$ be the trivial line bundle on $\bP^1$. Then $$
\bP(1\oplus V_{\epsilon})\sim_G \bP(L_0\oplus L_1),
$$
equivariantly. Using \cite[Proposition 5.2]{KT-struct}, we obtain 
\begin{align*}
    [\bP(L_0\oplus L_1)\actsfromright G]&=(1, G\actsfromleft k(\bP^2),())+( C_{2},C_{3}\actsfromleft k(\bP^1),(1) )\\&+(C_3,\mathfrak D_5\actsfromleft k(\bP^1),(2) )+(C_3,\mathfrak D_5\actsfromleft k(\bP^1),(1) )\\&+( C_{6},1\actsfromleft k,(3,2) )+( C_{6},1\actsfromleft k,(3,4) )\\&+( C_{6},1\actsfromleft k,(3,5) )+( C_{6},1\actsfromleft k,(3,1) )\\&+(C_{15},1\actsfromleft k,(3,11) )+(C_{15},1\actsfromleft k,(3,4)).
\end{align*} 
Here we specify the subgroups and their representations:
$$
C_3=\left\langle\begin{pmatrix}
\zeta^{2}_3&0\\0&\zeta^{2}_3
\end{pmatrix}\right\rangle,\quad C_{6}=\left\langle\begin{pmatrix}
0&\zeta_3\\\zeta_3&0
\end{pmatrix}\right\rangle\quad C_{15}=\left\langle\begin{pmatrix}
\zeta_3\zeta_5&0\\0&\zeta_3\zeta^4_5
\end{pmatrix}\right\rangle.$$
Note that 
\begin{align*}
    &[\bP(V)\actsfromright G]-[\bP(L_0\oplus L_1)\actsfromright G]\\
    &= (C_2,C_3 \actsfromleft k(\bP^1),(1))+(C_{6},1\actsfromleft k,(2,1) )+(C_{6},1\actsfromleft k,(4,5) )\\
    &+(C_{15},1\actsfromleft k,(1,11))+(C_{15},1\actsfromleft k,(12,4))-(C_{15},1\actsfromleft k,(3,4)).
\end{align*}
By conjugation relations \textbf{(C)},  \begin{align*}
(C_{15},1\actsfromleft k,(3,4) )&=(C_{15},1\actsfromleft k,(12,1) )
\end{align*}
The blow-up relations $\textbf{(B)}$ yield $$(C_{15},1\actsfromleft k,(12,1) )=(C_{15},1\actsfromleft k,(11,1)+(C_{15},1\actsfromleft k,(12,4) ),$$   
$$(C_{6},1\actsfromleft k,(2,3) )=(C_{6},1\actsfromleft k,(5,3) )+(C_{6},1\actsfromleft k,(2,1) ),$$
\begin{multline*}
(C_{6},1\actsfromleft k,(3,5))=  \\
(C_{6},1\actsfromleft k,(3,2) )+(C_{6},1\actsfromleft k,(4,5) )+(C_2,C_3 \actsfromleft k(\bP^1),(1)).
\end{multline*}
Summing up the last two equalities, we obtain 
$$
(C_{6},1\actsfromleft k,(2,1) )+(C_{6},1\actsfromleft k,(4,5) )+(C_2,C_3 \actsfromleft k(\bP^1),(1))=0
$$
and conclude 
$$[\bP(V)\actsfromright G]-[\bP(L_0\oplus L_1)\actsfromright G]=0\in\Burn_2(G),
$$
as expected. 
\end{exam}

\begin{exam}
Consider the action of $G=\fD_7$ on $\bP^3$, given by
\begin{align*}
    G=\left\langle
    \begin{pmatrix}
    \zeta_7 & 0 & 0 & 0\\
    0 & \zeta_7^{-1} & 0 & 0\\
    0 & 0 & \zeta_7^2 & 0\\
    0 & 0 & 0 & \zeta_7^{-2}
    \end{pmatrix},
    \begin{pmatrix}
    0 & 1 & 0 & 0\\
    1 & 0 & 0 & 0\\
    0 & 0 & 0 & 1\\
    0 & 0 & 1 & 0
    \end{pmatrix}
    \right\rangle \subset \PGL_4.
\end{align*}
The stabilizer chains are
\begin{center}
    \begin{tabular}{c|c|c|c|c}
    $t$  & $\Lambda$ & $N_G(\Lambda)$ & $\Delta_{\Lambda}^t$ & $\epsilon$\\
    \hline
    1  & $C_2$ & $C_2$ & $C_2$ & 0 \\

    1  & $C_2$ & $C_2$ & $C_2$ & 1 \\

    1  & $C_7$ & $C_7$ & $C_1$ & 2 \\

    1  & $C_7$ & $C_7$ & $C_1$ & 3 \\
    \end{tabular}
\end{center}

\noindent
The intermediate classes in the equivariant indexed Burnside groups are:

\begin{itemize}
    \item  $\Lambda=C_2 $ with $N_G(\Lambda)=C_2$:
        \begin{align*}
            (C_2 \subseteq C_2,1 \actsfromleft k(\bP^2),(),(0,1)) \in \Burn_{3,\{0,1\}}(C_2,C_2)    
        \end{align*}
    \item  $\Lambda=C_2$ with $N_G(\Lambda)=C_2$:
        \begin{align*}
            (C_2 \subseteq C_2,1 \actsfromleft k(\bP^2),(),(1,1)) \in \Burn_{3,\{0,1\}}(C_2,C_2)
        \end{align*}
    \item $\Lambda=C_7$:
        \begin{align*}
            &(C_1 \subseteq C_1,C_7 \actsfromleft k(\bP^2),(),(0,0))+(C_1 \subset C_7,1 \actsfromleft k,(5,6),(2,3))\\
            &+(C_1 \subseteq C_7,1 \actsfromleft k,(1,2),(2,1))+(C_1 \subseteq C_7,1 \actsfromleft k,(1,6),(2,2)) \\
            &\qquad\in \Burn_{3,\{0,1\}}(C_7,1)
        \end{align*}
    \item $\Lambda=C_7$:
        \begin{align*}
            &(C_1 \subseteq C_1,C_7 \actsfromleft k(\bP^2),(),(0,0))+(C_1 \subseteq C_7,1 \actsfromleft k,(4,6),(3,2))\\
            &+(C_1 \subseteq C_7,1 \actsfromleft k,(2,3),(3,6))+(C_1 \subseteq C_7,1 \actsfromleft k,(1,5),(3,1)) \\
            &\qquad\in \Burn_{3,\{0,1\}}(C_7,1).
        \end{align*}
\end{itemize}
    These classes are combined to obtain
        \begin{align*}
            &[\mathbb{P}(V) \actsfromright G]_{(\mathcal O_{\mathbb{P}(V)}(-1))}=(C_1 \subseteq C_1,G \actsfromleft k(\bP^3),(),(0))\\
            &+(C_1 \subset C_2,1 \actsfromleft k(\bP^2),(1),(0))+(C_1 \subseteq C_2,1 \actsfromleft k(\bP^2),(1),(1))\\
            &+(C_1 \subseteq C_7,1 \actsfromleft k,(3,5,6),(2))+(C_1 \subset C_7,1 \actsfromleft k,(1,1,2),(2))\\
            &+(C_1 \subseteq C_7,1 \actsfromleft k,(1,2,6),(2))+(C_1 \subseteq C_7,1 \actsfromleft k,(2,4,6),(3))\\
            &+(C_1 \subseteq C_7,1 \actsfromleft k,(2,3,6),(3))+(C_1 \subseteq C_7,1 \actsfromleft k,(1,1,5),(3))
        \end{align*}
        Applying $\eta_{\{0\}}$ and using relation $\textbf{(V)}$, we obtain the nonzero class
        \begin{align*}
            [\mathbb{P}(V) \actsfromright G]= & (1,G \actsfromleft k(\bP^3),())\\
            + & (C_7,1 \actsfromleft k,(1,1,2))+(C_7,1 \actsfromleft k,(2,4,6))\\
            + & (C_7,1 \actsfromleft k,(2,3,6)) \in \Burn_3(G);
        \end{align*}
        in fact, the point classes in this formula are equal, and nonzero, in 
        $\BC_3(G)= \bZ/2$. The action is birational to an action on $\bP^1\times \bP^2$, 
        with trivial action on the second factor and faithful action on the first factor, by the No-name Lemma. 
\end{exam}

\section{Automorphisms of $\bP^2$}
\label{sect:dim2}

In this section, we apply the Burnside group formalism to the problem of classification of actions of finite subgroups of $\PGL_3$ up to conjugation in the plane Cremona group $\mathrm{Cr}_2$ (see \cite{DI}).

For $n=2$, the classification of actions up to conjugation in $\PGL_3$ takes the form
(we follow \cite[Section 4.2]{DI} and \cite[Section 10]{KT-vector}):
\begin{itemize}
\item {\em intransitive}: 
$
G=C_m\times G',
$
with $G'\subset \GL_2$,
\item {\em transitive but imprimitive}:
certain extensions of $C_3$ or $\fS_3$ by bi-cyclic groups,
\item {\em primitive}:
$
\mathfrak{A}_5,\, \mathfrak{A}_6, \,  \mathrm{PSL}_2(\mathbb{F}_7),
$
the Hessian group 
$
3^2:\mathrm{SL}_2(\mathbb{F}_3),
$
and two of its subgroups.
\end{itemize}

\noindent
\textbf{Primitive actions.} These are 
completely understood via birational (super)rigidity techniques \cite{sako}.
E.g., $\fA_5$ admits one, $\fA_6$ admits four, and $\mathrm{PSL}_2(\bF_7)$ admits two non-birational actions on $\bP^2$ (see \cite[Theorem B.2]{Chelocal}). 

\begin{prop}
\label{prop:doesnot}
The Burnside group formalism does not distinguish primitive actions on $\bP^2$. 
\end{prop}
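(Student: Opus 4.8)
The plan is to reduce the statement to the two primitive groups that carry more than one action on $\bP^2$, and then to show that passing from one such action to another changes none of the symbols with nontrivial stabilizer: the only difference between the classes of two primitive actions will be the tautological term $(1,G\actsfromleft k(\bP^2),())\in\Burn_2^{\mathrm{triv}}(G)$, which merely records that non-conjugate primitive actions are non-birational (already known from \cite{sako}) and carries no information beyond the plain birational type. By \cite{DI} (see also \cite{sako}, \cite{Chelocal}) the primitive $G\subset\PGL_3$ are $\fA_5$, $\fA_6$, $\mathrm{PSL}_2(\bF_7)$, the Hessian group $3^2:\SL_2(\bF_3)$ and two of its subgroups; of these only $\fA_6$ and $\mathrm{PSL}_2(\bF_7)$ admit several actions (four and two, up to conjugacy in $\PGL_3$), so for the other three there is nothing to prove. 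For $G$ one of these two, the distinct actions come from the two faithful $3$-dimensional representations of the triple cover of $G$ and from precomposition by an outer automorphism, so starting from one action $\bP(V)$ the others are $\bP(V^\vee)$ (complex conjugation, which replaces each weight $b\in H^\vee$ by $-b$) and $\bP(\varphi^*V)$, $\bP(\varphi^*V^\vee)$ with $\varphi\in\mathrm{Aut}(G)$. It then suffices to show that the ``nontrivial part''
\[
\sum_{H\neq 1}\sum_F (H,Y\actsfromleft k(F),\beta_F)\ \in\ \Burn_2^{\mathrm{inc}}(G)\oplus\Burn_2^{\mathrm{comp}}(G)
\]
of $[\bP(V)\actsfromright G]$ is fixed by $b\mapsto -b$ and by $\mathrm{Aut}(G)$.

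Next I would describe the strata, using the standard model of Section~\ref{sect:com-lin}. In $\bP^2$ these are: the lines pointwise-fixed by involutions, the isolated fixed points with cyclic stabilizer, and the finitely many points with nonabelian stabilizer that get blown up on the way to standard form. No element of order $>2$ fixes a line pointwise, since it would act with a repeated eigenvalue, and $\fA_6,\mathrm{PSL}_2(\bF_7)\subset\PGL_3$ contain no such element (one checks this from the character tables); both groups have a single conjugacy class of involutions $\iota$, with $Z_G(\iota)\cong\fD_4$ and $\langle\iota\rangle$ its center, so $Z_G(\iota)/\langle\iota\rangle\cong C_2\times C_2$ acts faithfully on the fixed $\bP^1$ and the divisorial contribution is the single incompressible symbol $(C_2,\,C_2\times C_2\actsfromleft k(\bP^1),(1))$, which depends only on $G$. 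The isolated fixed point of $\iota$ gives $(C_2,Y\actsfromleft k,(1,1))$, which vanishes by $\mathbf{(V)}$. Each point with nonabelian stabilizer $N$ lies in an $\mathrm{Aut}(G)$-stable conjugacy class (for these groups $N$ is a $2$-group, e.g.\ a Sylow subgroup), and the symbols obtained by blowing it up depend only on $N$ and the $G$-module $V|_N$, whose character is real-valued, so they too are the same for all primitive actions. For the point symbols on cyclic $H=C_m$: when $m=3$ the two weights at each fixed point sum to $0$, so these vanish by $\mathbf{(V)}$; when $m=4,5$ the generator is conjugate in $G$ to its inverse, so $\mathbf{(C)}$ identifies the symbol with its negative; and for $\fA_6$ the exotic outer automorphism interchanges the classes $3A\leftrightarrow3B$ and $5A\leftrightarrow5B$, so one checks the paired symbols appear with equal multiplicity.

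The one case that is not purely formal is $H=C_7\subset\mathrm{PSL}_2(\bF_7)$, where inversion (and the outer automorphism) swap the two classes $7A$, $7B$. A Sylow $C_7$ has no fixed point with a larger stabilizer, and its three fixed points form one $N_G(C_7)/C_7\cong\{1,2,4\}\subset(\bZ/7)^\times$-orbit, so $C_7$ contributes exactly one point symbol -- equal to $(C_7,1\actsfromleft k,(1,3))$ for one action and to $(C_7,1\actsfromleft k,(4,6))$ for the other. Applying the blow-up relation $\mathbf{(B)}$ (the $\Theta_2$-term vanishing, since $b_1-b_2$ generates $\bZ/7$ in both cases) and then rescaling weights by $2\in\{1,2,4\}$ via $\mathbf{(C)}$ gives
\begin{align*}
(C_7,1\actsfromleft k,(1,3))&=(C_7,1\actsfromleft k,(5,3))+(C_7,1\actsfromleft k,(1,2))\\
&=(C_7,1\actsfromleft k,(5,6))+(C_7,1\actsfromleft k,(4,2))=(C_7,1\actsfromleft k,(4,6)),
\end{align*}
so the two are equal in $\Burn_2(\mathrm{PSL}_2(\bF_7))$. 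This finishes the comparison: the $H\neq1$ part of $[\bP^2\actsfromright G]$ is independent of the choice of primitive action.

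The main obstacle I expect is the systematic version of the last two paragraphs. One has to list all stabilizers -- abelian and nonabelian -- together with all of their fixed loci, confirm that no isolated fixed point secretly has a stabilizer large enough to force extra blow-ups and hence extra symbols, and then verify that the blow-up and conjugation relations of $\Burn_2(G)$ genuinely identify the a priori different ``dual'' symbols that the distinct actions produce (the $C_7$-computation above being the prototype). This bookkeeping is precisely what the recursive algorithm of Section~\ref{sect:com-lin}, implemented in \cite{TYZ-web}, carries out; comparing its output for the four (resp.\ two) primitive actions of $\fA_6$ (resp.\ $\mathrm{PSL}_2(\bF_7)$) would complete the proof.
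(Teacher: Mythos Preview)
Your approach is essentially the paper's: compute the classes and compare, ultimately relying on the algorithm of Section~\ref{sect:com-lin}. The paper states exactly this and then exhibits the $\mathrm{PSL}_2(\bF_7)$ case (Example~\ref{exam:psl27}) as representative, showing that the two classes agree symbol by symbol after applying $\mathbf{(C)}$ and $\mathbf{(O)}$.

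Two of your intermediate claims are off, although neither breaks the argument. First, the assertion that for $m=3$ the two weights at a fixed point sum to $0$ is false for $\mathrm{PSL}_2(\bF_7)$: the paper's computation records $(C_3,1\actsfromleft k,(1,1))$, not $(1,2)$. The reason the $C_3$ contribution still matches across actions is not $\mathbf{(V)}$ but $\mathbf{(C)}$: an element of $N_G(C_3)$ inverts $C_3$, so $(1,1)=(2,2)$. Second, the divisorial contribution from the involution is not a single symbol but appears with coefficient $2$ in the paper's formula; the second copy comes from blowing up the isolated $\fD_4$-fixed point (nonabelian stabilizer), whose exceptional $\bP^1$ is again pointwise fixed by the central $C_2$. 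This correction is again symmetric in the two actions and does not affect the comparison. Your $C_7$ identity $(1,3)=(4,6)$ is correct and is exactly the mechanism the paper invokes when it says ``conjugation relations imply'' equality; note that in the paper's normalization there are two $C_7$ point symbols per action rather than one, but they pair up under $\mathbf{(C)}$ in the same way. Since you explicitly defer the systematic bookkeeping to the implemented algorithm, these inaccuracies are cosmetic rather than structural.
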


The proof proceeds via a computation of all classes involved and comparisons of the resulting expressions in the respective Burnside groups. Here is a representative example:

\begin{exam}
\label{exam:psl27}
The action of $G:=\mathrm{PSL}_2(\mathbb F_7)$ on $\bP^2$ is {\em super-rigid}, and there are non-isomorphic 3-dimensional representations $V$ and $ V'$ of $G$, giving rise to non-birational $G$-actions on $\bP^2=\bP(V)$ and $\bP(V')$. The characters of the corresponding 
representations differ on elements of order 7. 
We compute the classes
        
        \begin{align*}
            [\mathbb{P}(V) & \actsfromright G]=(1,G \actsfromleft k(\bP^2),())+2(C_2,\fD_2 \actsfromleft k(\bP^1),(1))\\
            &+(C_3,1 \actsfromleft k,(1,1))+(C_4,1 \actsfromleft k,(1,1))+2(C_4,1 \actsfromleft k,(1,2))\\
            &+(C_7,1 \actsfromleft k,(6,5))+(C_7,1 \actsfromleft k,(1,4))\\
            &+(C_2^2,1 \actsfromleft k,((0,1),(1,0)))+((C_2')^2,1 \actsfromleft k,((0,1),(1,0)))     
        \end{align*}
        \begin{align*}
            [\mathbb{P}(V') & \actsfromright G]=(1,G \actsfromleft k(\bP^2),())+2(C_2,\fD_2 \actsfromleft k(\bP^1),(1))\\
            &+(C_3,1 \actsfromleft k,(1,1))+(C_4,1 \actsfromleft k,(1,1))+2(C_4,1 \actsfromleft k,(2,3))\\
            &+(C_7,1 \actsfromleft k,(6,3))+(C_7,1 \actsfromleft k,(1,2))\\
            &+(C_2^2,1 \actsfromleft k,((0,1),(1,0)))+((C_2')^2,1 \actsfromleft k,((1,1),(1,0))).     
        \end{align*}
    The representations $V$ and $V'$ differ by $\zeta_7 \mapsto \zeta_7^3$. Conjugation relations imply that 
    \begin{align*}
        [\mathbb{P}(V) \actsfromright G]=[\mathbb{P}(V') \actsfromright G].
    \end{align*}
\end{exam}

We record useful method to produce incompressible classes in dimension 3 (see Section~\ref{sect:inc}). 

\begin{prop}
\label{prop:cr}
Let $G$ be a finite group and 
$$
\bar{s} = (\bar{H},\bar{Y} \actsfromleft k(\bP^1)(t), (\bar{b})) \in 
\Burn_3(G) 
$$
a symbol appearing in a $\Theta_2$-relation.   
Then $\bar{Y}$ does not admit a primitive action on $\bP^2$. 
\end{prop}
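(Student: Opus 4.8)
The statement asserts that if $\bar{s}=(\bar H,\bar Y\actsfromleft k(\bP^1)(t),(\bar b))$ is a divisorial symbol occurring as a $\Theta_2$-term of a blow-up relation $\mathbf{(B)}$ in $\Burn_3(G)$, then the group $\bar Y$ cannot admit a primitive projective action on $\bP^2$. The plan is to trace back the mechanism by which such a $\Theta_2$-term arises and to extract a structural constraint on the pair $(\bar H,\bar Y\actsfromleft\bar K)$ that is incompatible with primitivity. First I would recall from Section~\ref{sect:eqrel} how a divisorial symbol of codimension $1$ can appear in $\Theta_2$: such a term is $(\overline{H},\overline{Y}\actsfromleft K(x),\bar\beta)$, produced from some symbol $(H,Y\actsfromleft K,\beta)$ of codimension $2$, i.e. with $\beta=(b_1,b_2)$, via $\overline H=\Ker(b_1-b_2)$ and with $K(x)$ carrying the $\overline Y$-action supplied by the Action construction of \cite[Section 2]{BnG}, extending the $Y$-action on $K$. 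Thus the field $\bar K=k(\bP^1)(t)$ appearing in $\bar s$ is necessarily of the form $K(x)$, so that a smooth projective model of $\bar K$ is $\bar Y$-equivariantly birational to $F\times\bP^1$ where $F$ is a model of $K$ (with $\dim F=1$), and the $\overline Y$-action does \emph{not} permute anything — it acts on $F$ through $Y$ and on the $\bP^1$-factor through the Action construction.

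The key step is then to analyze what the Action construction does on this new $\bP^1 = \Spec k[x]$ factor. From \cite[Section 2]{BnG}, the extension $\overline Y\to Y$ has the feature that the kernel $\overline H/\overline{\overline{H}}$ (here the relevant sub), equivalently the part of $\overline Y$ not already visible on $K$, acts on the variable $x$ \emph{linearly}, i.e. through a one-dimensional representation — concretely, $x$ transforms by a character, possibly twisted, so that the induced $\overline Y$-action on this $\bP^1$ is \emph{linear} with a fixed point (the point $x=0$, resp. $x=\infty$). Consequently the smooth model of $\bar K$ that one gets is $\overline Y$-equivariantly a $\bP^1$-bundle (in fact a product $F\times\bP^1$ up to birationality, with $\overline Y$ acting linearly, without interchange, on the $\bP^1$). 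This is exactly the structure appearing in Corollary~\ref{coro:yacts}: the field $\bar K$ is birational to a product of projective spaces with a componentwise (projectively) linear $\overline Y$-action. In particular any smooth projective surface model of $\bar K$ carrying the $\overline Y$-action is $\overline Y$-equivariantly birational to a Hirzebruch surface, i.e. to a $\overline Y$-equivariant $\bP^1$-bundle over a curve. I would then invoke the Dolgachev--Iskovskikh trichotomy \cite{DI}: a finite group acting on $\bP^2$ primitively is, in particular, \emph{not} conjugate in $\Cr_2$ to a group acting on a conic bundle over $\bP^1$ (equivalently, on a Hirzebruch surface), since a primitive action is minimal of Del~Pezzo type with $\Pic^G$ of rank $1$ and no invariant pencil of rational curves. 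Hence $\overline Y$, which \emph{does} preserve such a fibration structure on a model of $\bar K$, cannot be acting primitively on $\bP^2$; if it admitted a primitive action on $\bP^2$ as an abstract group, that would contradict the birational classification, since $\overline Y\actsfromleft\bar K$ and $\overline Y\actsfromleft\bP^2$ would be two $\overline Y$-actions on rational surfaces, and the primitive one is birationally rigid (superrigid, by \cite{sako}) hence not $\overline Y$-birational to a conic bundle.

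The main obstacle I anticipate is making the assertion ``the Action construction produces a linear $\bP^1$-factor with a fixed point, without interchange of factors'' fully precise and self-contained: this requires unpacking the recipe of \cite[Section 2]{BnG} for the $\overline Y$-action on $K(x)$ and checking that the new coordinate $x$ is acted on by $\overline Y$ through a character of a cyclic quotient, so that the resulting surface model is genuinely a ruled surface over the curve-model of $K$, with $\overline Y$ respecting the ruling. Once that is in hand — and it is essentially the content of Corollary~\ref{coro:yacts}, which already asserts that every $F$ appearing is birational to $\prod_j\bP(W_j)$ with $\overline Y$ not interchanging factors and acting (projectively) linearly on each — the rest is a citation of \cite{DI} and \cite{sako}: a primitive $\bP^2$-action is $\Cr_2$-rigid and in particular is not $G$-birational to any Mori fiber space over $\bP^1$, whereas the surface model attached to $\bar s$ \emph{is} such a fiber space for $\overline Y$. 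A secondary subtlety is the degenerate case $F=\mathrm{pt}$, where $\bar K=k(\bP^1)$ outright; but then $\overline Y$ acts on $\bP^1$, which is even more restrictive and still incompatible with a primitive planar action (a subgroup of $\PGL_2$ has a fixed point or an invariant pair, hence acts reducibly after blow-up, never primitively on $\bP^2$). I would organize the final write-up as: (i) reduce to the shape of $\Theta_2$-terms; (ii) invoke Corollary~\ref{coro:yacts} / the Action construction to conclude the surface model is a $\overline Y$-ruled surface; (iii) apply \cite{DI}, \cite{sako} to rule out primitivity.
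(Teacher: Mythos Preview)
Your core mechanism is right and matches the paper: a $\Theta_2$-term in $\Burn_3(G)$ arises from a codimension-$2$ symbol $(H,Y\actsfromleft K,(b_1,b_2))$ with $K=k(\bP^1)$, and the Action construction produces a $\bar Y$-action on $K(x)$ that is birational to a $\bP^1$-bundle over $\bP^1$ (a conic bundle / Hirzebruch surface). Since primitive actions on $\bP^2$ are not $\Cr_2$-conjugate to conic-bundle actions, the $\Theta_2$-action cannot be primitive. The paper argues this by running through the finite list of primitive groups: for $\fA_6$, $\mathrm{PSL}_2(\bF_7)$, and the Hessian subgroups it observes they do not embed in $\PGL_2$ and hence cannot act on a projectivized sum of line bundles over $\bP^1$; for $\fA_5$ it cites the fact that the primitive $\fA_5$-action on $\bP^2$ is not birational to any $\fA_5$-action on $\bP^1\times\bP^1$. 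Your uniform rigidity argument via \cite{sako} is a reasonable alternative packaging of the same facts.

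There is, however, a genuine confusion in your write-up about what is being claimed. You open by interpreting the conclusion as ``the group $\bar Y$ cannot admit a primitive projective action on $\bP^2$'' in the abstract sense, and your last step tries to deduce this: ``if it admitted a primitive action on $\bP^2$ as an abstract group, that would contradict the birational classification\ldots''. This is false and the argument does not work: $\fA_5$ \emph{does} admit a primitive action on $\bP^2$ and \emph{can} appear as $\bar Y$ in a $\Theta_2$-term (it embeds in $\PGL_2$); having two non-birational $\bar Y$-actions on rational surfaces contradicts nothing. The correct reading, confirmed by the paper's own proof, is that the \emph{specific} action $\bar Y\actsfromleft k(\bP^1)(t)$ arising in the $\Theta_2$-term is not (birational to) a primitive action on $\bP^2$. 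Your argument already establishes exactly this at the sentence ``Hence $\overline Y$\ldots cannot be acting primitively on $\bP^2$''; the attempted upgrade to a statement about the abstract group is both wrong and unnecessary.

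One minor point: your appeal to Corollary~\ref{coro:yacts} is misplaced. That corollary describes symbols produced by the De Concini--Procesi algorithm for projectively linear actions, not arbitrary $\Theta_2$-terms. What you actually need (and essentially sketch) is the direct structure of the Action construction from \cite{BnG}, which indeed gives a $\bar Y$-equivariant ruling; cite that instead.
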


\begin{proof} 
By classification, we know that an $\fA_5$-action on $\bP^2$ is not birational to an action on $\bP^1\times \bP^1$ \cite[Theorem~6.6.1]{CS};
$\fA_6$ and $\mathrm{PSL}_2(\mathbf F_7)$ cannot act on $\bP^1$ and thus not on the projectivization of a sum of line bundles over $\bP^1$. 
A similar argument applies to subgroups of the Hessian group (which admit a primitive action on $\bP^2$). 
\end{proof}

\noindent
\textbf{Transitive Imprimitive actions.} 
There are four types of such actions, two types with $G$ an extension of $C_3$ and
two additional types when $G$ is an extension of $\fS_3$, see \cite[Theorem 4.7]{DI}.

\begin{prop}
The Burnside group formalism allows to distinguish 
transitive imprimitive actions, indistinguishable by 
the $\mathbf{(RY)}$ invariant.
\end{prop}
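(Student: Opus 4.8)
The plan is to make the statement precise by recalling the Dolgachev--Iskovskikh list of the four families of transitive imprimitive actions on $\bP^2$ (\cite[Theorem 4.7]{DI}): two in which $G$ is an extension of $C_3$ by a bi-cyclic group $C_n \times C_m$, and two in which $G$ is an extension of $\fS_3$. Within each family, the non-birational members are detected by the $\mathbf{(RY)}$ invariant only in part — there exist pairs of actions, $\bP(V)$ and $\bP(V')$, say, distinguished by the subtler Burnside data. So first I would fix such a pair, realize each action in standard form via the De Concini--Procesi model as in Section~\ref{sect:com-lin}, and run the algorithm of \cite{KT-vector} to produce explicit expressions
$$
[\bP(V) \actsfromright G] = \sum_i \mathfrak s_i, \qquad [\bP(V') \actsfromright G] = \sum_j \mathfrak s'_j \ \in \ \Burn_2(G).
$$

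Next I would project to a summand where the difference is manifestly nonzero. The cleanest route, following the discussion after \eqref{eq:BurntoBC} and the strategy used in Examples~\ref{exam:psl27}, is to pass to $\BC_2(G)$ via the forgetful map, or — when the distinguishing symbols are incompressible divisorial classes, e.g.\ classes of curves of genus $\ge 1$ coming from the fixed loci of the bi-cyclic normal subgroup — to project to $\Burn_2^{\mathrm{inc}}(G)$, where there are no blowup relations and nonvanishing of the difference can be read off termwise (Corollary~\ref{coro:incbir}). For the $C_3$-extensions the relevant contributions typically sit in $\Burn_2^{\mathrm{comp}}(C_m)$-type pieces coming from cyclic stabilizers, so I would use the tabulated structure of $\Burn_2^{\mathrm{comp}}(C_m)$ together with the reduction-to-point-class and vanishing manipulations from Section~\ref{sect:further} to simplify both classes to a normal form and compare. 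Crucially, since the $\mathbf{(RY)}$ invariant — which only sees $\det(\mathfrak p) \in \wedge^2(H^\vee)$ at fixed points — fails to separate the chosen pair, I must check that the Burnside classes differ in data beyond the wedge of weights, i.e.\ in the unordered pair $(b_1,b_2)$ itself or in the field/$Y$-action attached to a divisorial stratum.

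I would organize the write-up family by family: for each of the four types, identify explicitly a pair of representations $V, V'$ with $\bP(V) \not\sim_G \bP(V')$, exhibit the computed classes, and point to the specific summand (a point class with stabilizer $C_m$, or an incompressible divisorial class) on which they disagree, while verifying that the corresponding $\mathbf{(RY)}$ data coincide. The main obstacle I anticipate is bookkeeping: the transitive imprimitive groups are extensions, so their abelian subgroups, centralizers, and the induced $Y$-actions on function fields of the many one-dimensional strata must all be tracked carefully through the recursion of Step~5, and the resulting long sums must be reduced using $\mathbf{(B)}$, $\mathbf{(V)}$, and $\mathbf{(C)}$ to a canonical form before comparison — this is exactly the kind of computation that is practical only with the {\tt magma} implementation of \cite{TYZ-web}. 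A secondary subtlety is ensuring that the distinguishing classes genuinely survive in $\Burn_2(G)$ (or in $\BC_2(G)$, or in the chosen direct summand) rather than being killed by a nonobvious relation, which is why reducing to the incompressible part, when available, is the safest line of argument.
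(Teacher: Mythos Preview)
Your plan is broadly correct and matches the paper's strategy in its ``cleanest route'' --- compute the classes via the algorithm of \cite{KT-vector} and compare their images in $\BC_2(G)$ --- but it is more elaborate than what the paper actually does, and one of your proposed alternatives is a dead end.

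The paper's proof is purely by example and does not march through all four families. It exhibits two explicit pairs: one of type (1) (the extension of $C_3$ by $C_8\times C_8$, with parameters $(s,t)=(1,7)$ versus $(3,5)$) and one of type (2) (the extension of $C_3$ by $C_{14}\times C_2$, with $(r,s,t)=(1,3,1)$ versus $(1,5,1)$). In each case it records that $\mathbf{(RY)}$ is inconclusive, writes out the Burnside class as an explicit sum of point and divisorial symbols, applies the forgetful map \eqref{eq:BurntoBC}, and checks with {\tt magma} that the difference is nonzero in $\BC_2(G)$. The $\fS_3$-extension types are not treated. Since the proposition is existential (``allows to distinguish''), two worked examples suffice.

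Your proposed fallback to $\Burn_2^{\mathrm{inc}}(G)$ via ``curves of genus $\ge 1$ coming from the fixed loci of the bi-cyclic normal subgroup'' does not work here: these are projectively linear actions on $\bP^2$, so every stratum with nontrivial stabilizer is a linear subspace, and every divisorial symbol produced by the algorithm has $K=k(\bP^1)$ with cyclic $Y$ (cf.\ Corollary~\ref{coro:yacts}). The paper says so explicitly in the first example: ``There are no incompressible symbols in the expressions above.'' So the incompressible route is unavailable, and the argument is forced through $\BC_2(G)$ and a machine check, just as you also propose.
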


We do not claim that we can distinguish {\em all} such actions. 
In each of the four types there is a bi-cyclic group $H\subset G$; restricting to $H$ and applying the Reichstein-Youssin determinant invariant $\textbf{(RY)}$ to $H$ 
gives non-birational actions in some cases.
Our examples focus on the simpler types in \cite[Theorem 4.7]{DI},
as it is more difficult to distinguish smaller actions. 

\

We consider: 

\begin{itemize}
        \item [(1)] extensions 
        $$
        1\to C_n\oplus C_n\to G\to C_3 \to 1
        $$
        with the action on $\bP^2=\bP^2(s,t)$ given by 
        \begin{align}
        \label{eqn:act1}
         (x:y:z)\mapsto   (\zeta_n^s x:y:z),\ (x:\zeta_n^t y:z),\ (z:x:y),
        \end{align}
        where  $s,t \in (\bZ/n)^\times$, and $\zeta_n$ is a primitive $n$-th root of unity. 
       \item [(2)] extensions
         $$
        1\to C_n\oplus C_m\to G\to C_3 \to 1,
        $$
       with $m=n/d$, with $d>1$, $d|n$, $s^2-s+1=0\ (\mathrm{mod}\ d)$, 
       and with the action on $\bP^2=\bP^2(r,s,t)$ via
        \begin{align}\label{eqn:act2}
         (x:y:z)\mapsto     (\zeta_m^rx:y:z),\ (\zeta_n^sx:\zeta_n^ty:z),\ (z:x:y).
       \end{align}

   \end{itemize}

\begin{exam}
Let $G$ be a group of type $(1)$, with $n=8$. Consider actions as in \eqref{eqn:act1} with 
and 
$$
s=1,\quad t=7,
$$
respectively, 
$$s'=3, \quad t'=5.
$$
The $\textbf{(RY)}$ 
invariant is inconclusive in this case. Computing the Burnside symbols as in Section \ref{sect:com-lin}, we obtain
 \begin{align*}
   [\bP^2(s,t)&\actsfromright G]
   = (1, G\actsfromleft k(\bP^2), ()) 
\\&+    
   ( C_{8},C_8\actsfromleft k(\bP^1),(3) )+( C_{8},C_8\actsfromleft k(\bP^1),(5) )
  \\&+( C_8^2,1\actsfromleft k,((1,2),(6,7)) )+( C_8^2,1\actsfromleft k,((7,6),(7,1)) ).
  \end{align*}
  \begin{align*}
   [\bP^2(s',t')&\actsfromright G]
    = (1, G\actsfromleft k(\bP^2), ()) 
\\&+ 
  ( C_{8},C_{8}\actsfromleft k(\bP^1),(1) )+( C_{8},C_{8}\actsfromleft k(\bP^1),(7) )
  \\&+( C_8^2, 1\actsfromleft k,((3,6),(2,5)) )+( C_8^2,1\actsfromleft k,((5,2),(5,3)) ).
  \end{align*}
  (As before, we omit to specify the action of $C_8$ on $k(\bP^1)$ from our notation.) 
  There are no incompressible symbols in the expressions above, however we are still able to distinguish the actions in the combinatorial Burnside group, after 
  applying map \eqref{eq:BurntoBC} to the difference 
  $$
  [\bP^2(s,t)\actsfromright G]-[\bP^2(s',t')\actsfromright G],
  $$
 and performing {\tt{magma}} computations in $\BC_2(G)$. 
 
The same argument applies to $n=5$, $s=1, t=2, s'=3,$ and $t'=4$; or $n=9$, $s=2, t=3, s'=4, $ and $t'=6$. 
  \end{exam}

\begin{exam}
Let $G$ act via type (2) with $n=14$ and $m=2$. Consider actions as in 
\eqref{eqn:act2} with 
$$
r=t=1, s=3,
$$
respectively, 
$$
r'=t'=1, s'=5.
$$
Again, the $\textbf{(RY)}$ 
invariant is inconclusive.
We have
 \begin{align*}
   [\bP^2(r,s,t)\actsfromright G]
   &=(1,G\actsfromleft k(\bP^2),())\\
   &+( C_{2},C_{14}\actsfromleft k(\bP^1),(1) )+( C_{2},C_{14}\actsfromleft k(\bP^1),(1) )
  \\&+( C_2\times C_{14},1\actsfromleft k,((0,3),(1,5)) )\\&+( C_2\times C_{14},1\actsfromleft k,((0,11),(1,8)) ),
  \end{align*}
  \begin{align*}
   [\bP^2(r',s',t')\actsfromright  G]
   &=(1,G\actsfromleft k(\bP^2),())\\
   &+( C_{2},C_{14}\actsfromleft k(\bP^1),(1) )+( C_{2},C_{14}\actsfromleft k(\bP^1),(1) )
  \\&+( C_2\times C_{14},1\actsfromleft k,((1,11),(0,1)) )\\&+( C_2\times C_{14},1\actsfromleft k,((1,3),(1,12)) ).
  \end{align*}
Applying map \eqref{eq:BurntoBC} to the difference and computing in $\BC_2(G)$ we find 
that the actions are non-birational.
  
\end{exam} 

\

 \noindent \textbf{Intransitive actions.} Existence of $G$-fixed points makes it more difficult to classify intransitive actions using birational rigidity techniques. However, it is well-suited for the Burnside group formalism. Recall that intransitive actions take the form of 
 $$
G=C_n\times G',\quad n\geq 2,
 $$
where $G'\subset\GL_2$ is a lift of a subgroup $\bar G'\subset \mathrm{PGL_2}$. We are again in the situation of Section \ref{sect:dim1}:
\begin{itemize}
    \item $\bar G'=C_m$ for some $m\geq 2$. Then $G'$ is also a cyclic group, i.e., $G$ is a rank $2$ abelian group. The $\textbf{(RY)}$ invariant determines equivariant birationality of such actions  \cite[Theorem 7.1] {reichsteinyoussininvariant}.
    \item $\bar G'=\fD_m,\fA_4,\fS_4$ or $\fA_5$. By \cite[Section 10]{KT-vector}, we know that $G$ admits non-birational actions when $\varphi(n)\geq 3$.
    Here we modify the proof to cover more cases when $n\ge 2$.
\end{itemize}
    Let $\epsilon$ be a primitive character of $C_m,$ $V$ a faithful two-dimensional linear representation of $G'$, and $V_\epsilon:= \epsilon \otimes V$ its twist by $\epsilon$. This yields 
    generically free action $G$-action on $\bP^2=\bP(1\oplus V_\epsilon)$. To put the action in standard form, we first need to blow up the point $(1:0:0)$ as it has nonabelian generic stabilizer. The action on the exceptional divisor is given by $\bP(V_\epsilon)$. On the standard model, 
    there are two divisors with generic stabilizer $H$, where $H$ is the maximal subgroup of $G$ acting via scalars on $V_\epsilon$. For example, when $\bar G'=\fA_5$, we can choose the lift $G'=\SL_2(\mathbb F_5)$ and in this case,  $$
    H=\begin{cases}
C_n&\text{ when } n \text{ is even, }\\
C_{2n}&\text{ when } n \text{ is odd.}
\end{cases}$$
Let $\chi_\epsilon$ be the character of $H$ corresponding to the action, which depends on choice of $\epsilon$. The two divisors contribute  
\begin{align}
\label{eq:Cninc}
   (H,\bar G'\actsfromleft k(\bP(V)),(\chi_\epsilon))+(H,\bar G'\actsfromleft k(\bP(V)),(-\chi_\epsilon))
\end{align}
to the class $[\bP^2\actsfromright G]$;   
these symbols are incompressible, as explained in Section \ref{sect:inc}. When $\varphi(n)\geq 3$, we can produce non-birational actions by choosing characters $\epsilon\neq \pm \epsilon'$. But one can do better: 



\begin{coro}\label{coro:incbir}
For $\bar G'=\fD_m$, with $m\ne 1,2,3,4,6,8,12$, or $\bar G'=\fA_5$, and all $n\ge 2$, the group 
$G=C_n\times G'$ admits non-birational linear actions on $\bP^2$.
\end{coro}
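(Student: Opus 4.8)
The plan is to isolate the incompressible part of the class and reduce to the one–dimensional classification. Recall from the paragraph preceding the statement that, after putting $\bP^2=\bP(1\oplus V_\epsilon)$ in standard form by blowing up the point $(1:0:0)$, there are exactly two divisors with generic stabiliser $H$, namely the exceptional divisor of this blow-up and the strict transform of the hyperplane $\bP(V_\epsilon)$; each carries the induced $\bar G'=Z_G(H)/H$-action on $\bP(V)$, and they contribute the two symbols in \eqref{eq:Cninc} to $[\bP^2\actsfromright G]$. These are the incompressible summands, so the image of $[\bP^2\actsfromright G]$ under the projection to $\Burn_2^{\mathrm{inc}}(G)$ is exactly
$$
(H,\bar G'\actsfromleft k(\bP(V)),(\chi_\epsilon))+(H,\bar G'\actsfromleft k(\bP(V)),(-\chi_\epsilon)).
$$
Since $\Burn_2^{\mathrm{inc}}(G)$ is \emph{freely} generated by incompressible divisorial symbols modulo conjugation (see \eqref{eqn:deco}), two linear actions of $G$ on $\bP^2$ of this shape are non-birational as soon as these projections are distinct, so the whole problem is to make the data $(\bar G'\actsfromleft k(\bP(V)),\{\chi_\epsilon,-\chi_\epsilon\})$ vary.

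First I would observe that $H$ and the character $\chi_\epsilon$ do not depend on the choice of the faithful two-dimensional representation $V$ of $G'$: the maximal subgroup of $G=C_n\times G'$ acting on $V_\epsilon$ by scalars is $C_n\times Z(G')$, and $Z(G')$ acts on every faithful two-dimensional $V$ through the same scalar character (the sign character for $G'=\fD_{2m}$ and $G'=\SL_2(\bF_5)$, and trivially for $G'=\fD_m$ with $m$ odd), so in \eqref{eq:Cninc} only the field-action $\bar G'\actsfromleft k(\bP(V))$ moves when $V$ does. Moreover, since $H$ is central in $G$ we have $Z_G(H)=G$, and the conjugation relation $\mathbf{(C)}$ acts on the field factor only through the inner automorphism of $\bar G'$ induced by $gH$; inner twists do not change the $\bar G'$-equivariant birational type of $\bP(V)$, so $\mathbf{(C)}$ cannot identify the symbol of $V$ with that of $V'$. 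Hence, keeping $\epsilon$ fixed, the classes of $\bP(1\oplus V_\epsilon)$ and $\bP(1\oplus V'_\epsilon)$ already differ in $\Burn_2^{\mathrm{inc}}(G)$ whenever $\bP(V)\not\sim_{\bar G'}\bP(V')$, and this conclusion is uniform in $n\ge 2$ — in particular it covers $n=2,3,4,6$, where $\varphi(n)\le 2$ and varying $\epsilon$ contributes nothing.

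It then remains to exhibit two non-equivariantly-birational actions of $\bar G'$ on $\bP^1$. For $\bar G'=\fA_5$ this is immediate from Section~\ref{sect:dim1}: $\SL_2(\bF_5)$ has two non-isomorphic faithful two-dimensional representations, giving non-birational $\fA_5$-actions on $\bP^1$ (distinguished by the weights $(1),(4)$ versus $(2),(3)$ at the $C_5$-fixed points). For $\bar G'=\fD_m$ one uses the dimension-one classification: the birational type of such an action is determined by its restriction to the rotation subgroup $C_m$, i.e. by a primitive character of $C_m$ up to sign, and one checks that at least two inequivalent such actions exist precisely under the hypothesis $m\notin\{1,2,3,4,6,8,12\}$. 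Choosing two inequivalent $V,V'$ and feeding them into the previous step produces the desired non-birational linear $G$-actions on $\bP^2$ for every $n\ge 2$.

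The step that needs genuine care — and the main obstacle — is the claim that the incompressible part of $[\bP^2\actsfromright G]$ is \emph{exactly} \eqref{eq:Cninc}: one must verify that the exceptional divisors over the remaining fixed loci of $\bar G'$ on $\bP(V_\epsilon)$ carry only \emph{cyclic} $Y$ acting on rational curves and are therefore compressible, and that when $n$ is even (so that $C_n\times G'$ no longer acts generically freely on $\bP^2$) one correctly passes to the generically free quotient before forming the standard model. This bookkeeping, together with the enumeration at the borderline values $m=8,12$ — where $(\bZ/m)^\times$ fails to be cyclic and one must rule out a hidden identification of the two candidate classes — is exactly where the excluded values enter; the remainder is routine manipulation with the relations of Section~\ref{sect:eqrel} and with the dimension-one list.
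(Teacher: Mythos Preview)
Your approach is exactly the paper's: vary the two-dimensional representation $V$ of $G'$ while keeping $\epsilon$ fixed, and use that non-birational $\bar G'$-actions on $\bP(V)$ produce distinct incompressible summands in \eqref{eq:Cninc}; the paper's two-sentence proof simply cites Section~\ref{sect:dim1} for the existence of such $V,V'$ and invokes \eqref{eq:Cninc}. One remark on your final paragraph: the bookkeeping you flag (that the remaining strata on the standard model carry only cyclic $Y$, and the passage to the effective quotient when $n$ is even) is indeed routine and is absorbed in the discussion preceding \eqref{eq:Cninc}; your word ``precisely'' in the $\fD_m$ step is an overstatement, since for $m=8,12$ the group $\fD_m$ \emph{also} has two non-birational actions on $\bP^1$ (the $C_m$-weights lie in the classes $\{1,-1\}$ and $\{3,-3\}$), so the exclusion of these values is not forced by the argument --- but this only means the corollary is stated conservatively, not that your proof has a gap.
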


\begin{proof}
From Section \ref{sect:dim1}, we know that $\fD_m$, with $m$ as in the statement, and $\fA_5$ admit non-birational actions on $\bP^1$. 
This will contribute different incompressible symbols to \eqref{eq:Cninc}.
\end{proof}

Now we consider the case $\bar G'=\fD_m$ in more detail. 
Recall that for $m$ odd, a generically free action of $\fD_m$ on $\bP^1$ is linear; for $m$ even, it is projectively linear---it arises from a 2-dimensional faithful representation of $\fD_{2m}$. 
In both cases, the representation is determined by a primitive character
$\psi$ of $C_m$, respectively $C_{2m}$, we denote it by $V_{\psi}$.
We obtain an action of  $G=C_n\times \fD_m$ on 
$$
\bP^2=\bP^2(\epsilon,\psi):=\bP(1\oplus V_{\epsilon,\psi}), \quad V_{\epsilon,\psi}:= \epsilon \otimes V_{\psi}.
$$

\begin{lemm}
\label{lemm:Dmpm}
We have
$$
\bP^2(\epsilon,\psi) \sim_G 
\bP^2(-\epsilon,\psi) \sim_G
\bP^2(\epsilon,-\psi) \sim_G
\bP^2(-\epsilon,-\psi).
$$
\end{lemm}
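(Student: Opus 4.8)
The plan is to exhibit explicit $G$-equivariant birational self-maps of $\bP^2$, or rather equivariant isomorphisms between the four projective bundles, that realize the sign changes $\epsilon \mapsto -\epsilon$ and $\psi \mapsto -\psi$ separately; by composing them we obtain all four equivalences. I would proceed in two independent steps, one for each sign.

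For the sign change in $\psi$: the action of $\fD_m$ on $\bP^1$ via $V_\psi$ and via $V_{-\psi}$ are related by the outer automorphism swapping the two fixed points of $C_m$ — concretely, conjugation by $\begin{pmatrix}0&1\\1&0\end{pmatrix}$, which already lies in the image of $\fD_m$ in $\PGL_2$. Hence $V_\psi$ and $V_{-\psi}$ are projectively equivalent as $\fD_m$-representations (this is exactly the $m$-odd/$m$-even discussion recalled just before the lemma, together with the dihedral relation). Twisting by $\epsilon$ and adding a trivial summand, the induced $G$-actions on $\bP(1\oplus V_{\epsilon,\psi})$ and $\bP(1\oplus V_{\epsilon,-\psi})$ are then conjugate in $\PGL_3$, in particular $G$-equivariantly birational. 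This gives $\bP^2(\epsilon,\psi)\sim_G \bP^2(\epsilon,-\psi)$ and, identically, $\bP^2(-\epsilon,\psi)\sim_G\bP^2(-\epsilon,-\psi)$.

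For the sign change in $\epsilon$: here one cannot argue by a linear conjugation, since replacing $\epsilon$ by $-\epsilon$ genuinely changes the representation. Instead I would use the projective-bundle / No-name type trick already exploited in the examples of Section~\ref{sect:com-lin}: realize $\bP(1\oplus V_{\epsilon,\psi})$ as $\bP(L_0\oplus L_1)$ over the base $\bP(V_\psi)=\bP^1$ carrying the $\fD_m$-action, where $L_0=\cO_{\bP^1}$ and $L_1=\cO_{\bP^1}(1)$ twisted by $\epsilon$. The fiberwise Cremona involution $(u:v)\dashrightarrow(v:u)$ on $\bP(L_0\oplus L_1)$ — equivalently, tensoring the rank-two bundle by $L_1^{-1}$ and swapping factors — is $\fD_m$-equivariant over the base and sends $\bP(L_0\oplus L_1)$ to $\bP(L_0\oplus L_1^{-1})=\bP(1\oplus V_{-\epsilon,\psi})$ (after absorbing the overall twist, which does not change the projectivization). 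Since this map commutes with the $C_n$-action (which acts only through the twist $\epsilon$, hence by scalars on each fiber) and with $\fD_m$, it is $G$-equivariant, giving $\bP^2(\epsilon,\psi)\sim_G\bP^2(-\epsilon,\psi)$.

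Combining the two steps yields the full chain of equivalences in the statement. The main obstacle is the second step: one must check carefully that the fiberwise involution is compatible with the full $G=C_n\times\fD_m$-action and not merely with $C_n$ — in particular that the non-diagonal generator of $\fD_m$, which permutes the two eigenlines of $V_\psi$ in the base, lifts to the total space in a way that commutes with the fiber swap. I expect this to reduce to a direct matrix check (the lift can be chosen to act on $L_0\oplus L_1$ by a monomial matrix), but it is where the argument has to be done honestly rather than by a dimension count.
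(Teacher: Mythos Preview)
Your treatment of the $\psi$-sign is exactly the paper's: the linear map $(x:y:z)\mapsto(x:z:y)$ intertwines the two actions, and this is conjugation by the reflection already sitting in $\fD_m$.

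For the $\epsilon$-sign the paper is far more direct: it simply writes down the standard quadratic Cremona (composed with a transposition),
\[
(x:y:z)\dashrightarrow\Bigl(\tfrac{1}{x}:\tfrac{1}{z}:\tfrac{1}{y}\Bigr),
\]
and one checks in one line that it carries the $G$-action on $\bP^2(\epsilon,\psi)$ to that on $\bP^2(-\epsilon,\psi)$; the remaining equivalence is $(x:y:z)\dashrightarrow(1/x:1/y:1/z)$. Your bundle-theoretic detour is aiming at this map, but as written it does not produce it. The ``fiberwise Cremona $(u:v)\mapsto(v:u)$'' is a linear isomorphism of each $\bP^1$-fiber, not a birational self-map, and tensoring $L_0\oplus L_1$ by $L_1^{-1}$ gives an isomorphism of $G$-linearized projective bundles which is the identity on the underlying variety---so neither operation yields a new birational model. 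The map you actually need is the Cremona centered at the three coordinate points, which is not fiberwise over $\bP(V_\psi)$: it blows up $[1:0:0]$, $[0:1:0]$, $[0:0:1]$ and contracts the coordinate lines. The verification you flag as ``the main obstacle'' is then immediate in coordinates: e.g.\ the $C_n$-generator acts by $(x:y:z)\mapsto(x:\epsilon y:\epsilon z)$ on the source, and
\[
\Bigl(\tfrac{1}{x}:\tfrac{1}{\epsilon z}:\tfrac{1}{\epsilon y}\Bigr)=\Bigl(\tfrac{1}{x}:\epsilon^{-1}\tfrac{1}{z}:\epsilon^{-1}\tfrac{1}{y}\Bigr),
\]
which is the $(-\epsilon)$-action on the target; the rotation and reflection in $\fD_m$ are checked the same way.
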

\begin{proof}
Indeed, equivariant birationality from the $G$-action on  $\bP^2(\epsilon,\psi)$
to the other actions is realized by 
$$
(x:y:z)\dashrightarrow(\frac1x:\frac1z:\frac1y),\,(x:z:y),\text{ and }(\frac1x:\frac1y:\frac1z),
$$ 
respectively. 
\end{proof}

\

\noindent
\textit{$m$ is odd:}
The following sum of 
incompressible symbols \begin{align}
\label{eq:incompDmodd}
 (C_n,\fD_m\actsfromleft k(\bP(V_\psi)),\epsilon)+(C_n,\fD_m\actsfromleft k(\bP(V_\psi)),-\epsilon)
 \end{align}
 contributes to the class
 $[\bP^2(\epsilon,\psi)\actsfromright G]$;
 we obtain similar expressions for the $G$-action on $\bP^2(\epsilon',\psi')$. 
We observe:
 \begin{itemize}
 \item when $\epsilon\neq \pm \epsilon'$, 
 the symbols in \eqref{eq:incompDmodd} have different weights; 
 \item 
 when $\psi\ne\pm\psi'$, the actions of $\fD_m$ on $\bP^1$ is not birational to each other.
 \end{itemize}
 Lemma \ref{lemm:Dmpm} implies that the Burnside group formalism determines equivariant birationality in this case. 
 
 On the other hand, when $m$ is {\em even}, the classification of equivariant birational types remains open: 
 
\begin{exam}
\label{exam:inter}
Consider $G=C_3\times\fD_8$, and put $\psi':=\psi^3$. Then 
$$
[\bP^2(\epsilon, \psi) \actsfromright G]-[\bP^2(\epsilon, \psi') \actsfromright G] = 0\in \Burn_2(G).
$$
However, we cannot tell whether or not 
$$
\bP^2(\epsilon, \psi) \stackrel{\!\!\!?}{\sim_G} \bP^2(\epsilon, \psi').
$$
In detail, 
\begin{align*}
   &[\bP^2 (\epsilon, \psi)\actsfromright G]=(1, G\actsfromleft k(\bP^2),())\\ & +2(C_{2}, C_{6}\actsfromleft k(\bP^1),(1) )+2(C_{2}',C_{6}\actsfromleft k(\bP^1),(1) )\\
   &+(C_6,\mathfrak D_4\actsfromleft k(\bP^1),(1) )+(C_6,\mathfrak D_4\actsfromleft k(\bP^1),(5) )\\
   &+(C_2''\times C_{6},1\actsfromleft k,((0,3),(1,5)) )+( C_2''\times C_{6},1\actsfromleft k,((1,2),(1,1)) )\\
   &+( C_2''\times C_{6},1\actsfromleft k,((1,4),(0,3)) )+( C_2'''\times C_{6},1\actsfromleft k,((1,2),(0,3)) )\\
   &+(C_2'''\times C_{6},1\actsfromleft k,((1,5),(1,4)) )+( C_2'''\times C_{6},1\actsfromleft k,((1,1),(0,3)) )\\&+(C_{24},1\actsfromleft k,(19,11))+(C_{24},1\actsfromleft k,(5,6))+(C_{24},1\actsfromleft k,(19,18)),
  \end{align*}
 while 
  \begin{align*}
   &[\bP^2(\epsilon, \psi')\actsfromright G]\\
   &=(1, G\actsfromleft k(\bP^2),()) + \ldots \\  
 &+(C_{24},1\actsfromleft k,(6,17))+(C_{24},1\actsfromleft k,(7,23))
+(C_{24},1\actsfromleft k,(7,18)),
  \end{align*}
with the only difference in the sum of terms with stabilizer $C_{24}$, and these expressions are equal in $\Burn_2(G)$. 

\end{exam}

\section{Automorphisms of $\bP^3$}
\label{sect:3}

In this section, we give new examples of non-birational imprimitive linear actions on $\bP^3$. The basic terminology is as follows:
$$
\text{actions}
\begin{cases}
\text{intransitive: invariant point or line} \\
\text{transitive:} 
\quad
\begin{cases}
\text{imprimitive:}\quad 
\begin{cases}
\text{2 skew lines} \\
\text{orbit of length 4 (monomial)}
\end{cases}
\\
\text{primitive: none of the above} 

\end{cases}
\end{cases}
$$

\

\noindent\textbf{Primitive actions.} We follow \cite{CS-finite}. 
There are 30 conjugacy classes of finite subgroups $G\subset \PGL_4$ yielding primitive actions. They are listed, with inclusions, in \cite[Appendix A]{CS-finite}.  
These actions can be analyzed by birational (super)rigidity techniques, see \cite{CheShr} or \cite{CS-finite}. By \cite[Theorem 1.1]{CS-finite}, the action is birationally {\it rigid} iff $G\neq \fA_5$ or $\fS_5$. This means that
applying $G$-MMP to any $G$-birational model one is reduced to $\bP^3$; but this does not imply that different actions on $\bP^3$ are equivariantly birational.  
We now list representative computations of Burnside classes: 

\begin{itemize}
\item 
$G:=\fA_5$: Let $V$ be its irreducible 4-dimensional representation. Consider the induced action on $\bP^3=\bP(V)$. Then  
\begin{align*}
        & [\mathbb{P}^3 \actsfromright G]=
        (1,\fA_5 \actsfromleft k(\bP^3),()) 
        +2(C_2,C_2 \actsfromleft k(\bP^2),(1)) \\
        &+(C_3,1 \actsfromleft k(\bP^1),(2,2))\!+\!(C_3,1 \actsfromleft k(\bP^1),(1,1))\\
        &+(C_5, \actsfromleft k,(1,1,1))+(C_5,C_5 \actsfromleft k,(2,2,4))
    \end{align*}
By Lemma~\ref{lemm:triv-point}, the point classes are trivial; furthermore,
$$
(C_2,C_2 \actsfromleft k(\bP^2),(1)) = 
(C_2,C_2 \actsfromleft k(\bP^1),(1,1)) = 0 \in \Burn_3(G), 
$$
$$
(C_3,1 \actsfromleft k(\bP^1),(b,b)) = 
(C_3,1 \actsfromleft k,(b,b,b))=0 \in \Burn_3(G),
$$
by $\mathbf{(B)}$ and the vanishing relation $\mathbf{(V)}$.
\item $G=\mathrm{PSL}_2(\mathbb F_7)$: The $G$-action on $\bP^3$ is super-rigid \cite[Theorem 1.3]{CS-finite}, but 
every faithful action gives
$$
[\bP^3\actsfromright G]=(1, G\actsfromleft \bP^3,()) \in \Burn_3(G).
$$
\item 
$G=\fA_6$: There are only two actions; they are rigid but not super-rigid, and thus equivariantly birational.
The corresponding classes are 
    \begin{align*}
        [\mathbb{P}^3 \actsfromright G]&= 
        (1, G\actsfromleft k(\bP^3),())
        +(C_3,C_3 \actsfromleft k(\bP^2),(2)),\\
        &+(C_3^2,1 \actsfromleft k,((2,2),(0,1),(2,1)))\\
        &+(C_3^2,1 \actsfromleft k,((0,2),(2,1),(2,2))) . 
    \end{align*}
      \begin{align*}
        [\mathbb{P}^3 \actsfromright G]&= 
        (1, G\actsfromleft k(\bP^3),())
        +(C_3',C_3 \actsfromleft k(\bP^2),(2)),\\
        &+(C_3^2,1 \actsfromleft k,((0,2),(1,1),(1,0))) \\
        &+(C_3^2,1 \actsfromleft k,((0,2),(1,0),(2,2))),
    \end{align*}
    and the nontrivial contributions to their classes in $\BC_3(G)$ are equal, as expected.  But they are nontrivial in this group.
\item $G=\fS_6$: There are two actions, with 
Burnside classes
\begin{align*}
    [\mathbb{P}^3 \actsfromright G]&=(C_1,\fS_6 \actsfromleft k(\mathbb{P}^3),())\\
    &+(C_2,\fA_4 \actsfromleft k(\bP^2),(1))+(C_2',\fA_4 \actsfromleft k(\bP^2),(1))\\
    &+(C_2'',C_2^2 \actsfromleft k(\bP^2),(1))+(C_3,\fS_3 \actsfromleft k(\bP^2),(1))\\
    &+(C_3^2,1 \actsfromleft k,((1,1),(1,2),(2,0))),
\end{align*}
respectively, 
\begin{align*}
    [\mathbb{P}^3 \actsfromright G]&=(C_1,\fS_6 \actsfromleft k(\mathbb{P}^3),())\\
    &+(C_2,\fA_4 \actsfromleft k(\bP^2),(1))+(C_2',\fA_4 \actsfromleft k(\bP^2),(1))\\
    &+(C_2'',C_2^2 \actsfromleft k(\bP^2),(1))+(C_3',\fS_3 \actsfromleft k(\bP^2),(2))\\
    &+(C_3^2,1 \actsfromleft k,((0,2),(2,0),(2,2)))
\end{align*}
These differ in $\BC_3(G)=(\bZ/2)^5\oplus \bZ/4$; thus, the actions are not birational. 
\item $G=\fA_7$: There are two actions. The actions are super-rigid and thus not birational to each other. 
The respective classes are:
    \begin{align*}
        [\mathbb{P}^3 \actsfromright G]&=(1, G\actsfromleft k(\bP^3),()) +(C_2,\fS_3 \actsfromleft k(\bP^2),(1))\\
       &+(C_3,\fA_4 \actsfromleft k(\bP^2),(2))\\
       &+(C_7,1 \actsfromleft k,(2,4,4))+(C_7,1 \actsfromleft k,(1,3,5))\\
       &+(C_7,1 \actsfromleft k,(2,3,3))\\
       &+(C_3^2,1 \actsfromleft k,((0,1),(1,1),(2,0)))\\
       &+(C_3^2,1 \actsfromleft k,((0,1),(2,0),(2,2))),
    \end{align*}
    \begin{align*}
        [\mathbb{P}^3 \actsfromright G]&=(1, G\actsfromleft k(\bP^3),())+(C_2,\fS_3 \actsfromleft k(\bP^2),(1))\\
        &+(C_3,\fA_4 \actsfromleft k(\bP^2),(2))\\
        &+(C_7,1 \actsfromleft k,(2,4,4))+(C_7,1\actsfromleft k,(1,3,5))\\
        &+(C_7,1 \actsfromleft k,(2,3,3))\\
        &+(C_3^2,1 \actsfromleft k,((0,1),(1,0),(2,1)))\\
        &+(C_3^2,1 \actsfromleft k,((0,1),(1,0),(1,2))).
    \end{align*}
We have $\BC_3(G)=(\bZ/2)^3\oplus \bZ$, the (nontrivial contributions to) combinatorial Burnside classes of the two actions are equal, which in this case implies that the classes are equal in $\Burn_3(G)$.  
\end{itemize}

\noindent\textbf{Transitive imprimitive actions.}
Recall that these are of two types:
\begin{itemize}
\item leaving invariant a union of two skew lines,
\item having an orbit of length 4 (monomial subgroups)
\end{itemize}
The second type was analyzed in \cite{CSar}; by its main theorem, every imprimitive monomial subgroup, with the exception of ({\tt GAP ID})
$$
G_{48,3}, \quad G_{9 6,72}, \quad \text{ or } \quad G_{324,160},
$$
is {\it $G$-solid} (i.e., not $G$-birational to conic bundles or Del Pezzo fibrations). Examples of non-birational actions 
are given in \cite[Example 1.6, 1.7 and 1.8]{CSar}. 

Here we present applications of the Burnside group formalism to actions leaving invariant two skew lines.

\begin{exam}
\label{exam:lines}
Let $G:=\fD_5\times\fD_4$ and write $\psi_m$ for a primitive characters of $C_m$. As in Section~\ref{sect:dim2}, let $V_{\psi}$ be a faithful 2-dimensional representation of $\fD_m$ determined by $\psi_m$. 

We have generically free linear $G$-actions on 
\begin{equation}
\label{eqn:act3}
\bP^3=\bP(V_{\psi_5} \oplus V_{\psi_4}), \quad 
\text{respectively,} \quad 
\bP^3=\bP(V_{\psi_5^2} \oplus V_{\psi_4}).
\end{equation}
Our algorithm presents the class of each action in \eqref{eqn:act3} as a sum of more than $60$ symbols;  
we have listed them at \cite{TYZ-web}. 
Again, with {\tt{magma}}, we find that the projection of the difference 
of the classes to $\BC_3(G)$ is nonzero and we conclude that the actions are not birational.

This is the smallest such example we could find; 
the same holds for $G:=\fD_7\times\fD_4$ (and 
$\psi_5$ replaced by $\psi_7$). 
\end{exam}

\noindent\textbf{Intransitive actions:} The discussion is similar to that in Section \ref{sect:dim2}. In dimension $3$, intransitive actions take the form of 
$$
G=C_n\times G',\quad n\geq 2,
$$ 
where $G'\subset \GL_3$ is a lift of $\bar G'\subset \mathrm{PGL}_3$. It is shown in \cite[Theorem 11.2]{KT-vector} that when 
$$
\bar G'=\fS_4,\quad \fA_5, \quad \mathrm{PSL}_2(\bF_7),  \quad \fA_6 \quad\text{ and }\quad \varphi(n)\geq 3,
$$
$G$ admits non-birational actions. Here we use the same argument to cover more cases again: Let $V$ be a $3$-dimensional faithful representation of $G'$ and $\epsilon$ a primitive character of $C_n$. Let $V_\epsilon:=\epsilon \otimes V$ and consider the action $\bP(1\oplus V_\epsilon)$. We need to blow up the fixed point $(1:0:0:0)$ to put the action into standard form and on the blow-up model, there will be two divisors with generic stabilizer $H$, where $H$ is the maximal subgroup of $G$ acting via scalars. Their contribution to the class is 
$$
(H,\bar G'\actsfromleft k(\bP(V)),(\chi_\epsilon))+(H,\bar G'\actsfromleft k(\bP(V)),(-\chi_\epsilon)).
$$ 
These symbols are incompressible for our choice of $\bar G'$ because $\mathrm{PSL_2}(\bF_7)$ and $\fA_6$ are nonabelian and cannot act generically freely on $\bP^1$ (see Proposition~\ref{prop:cr}).  Actions of $\fS_4$ and $\fA_5$ on $\bP^1\times\bP^1$ with trivial action on one factor and generically free actions on the other factor are not linearizable. Similarly to Corollary \ref{coro:incbir}, we know that if $\bar G'$ admits non-birational actions on $\bP^2$, then $G$ admits non-birational actions on $\bP^3$. Keeping the notation above, we arrive at:

\begin{coro}
For $G=C_n\times G'$, with $\bar G'=\mathrm{PSL}_2(\bF_7)$ or $\fA_6$, 
there exist non-birational 
intransitive $G$-actions on $\bP^3$, for all $n\ge 2$.  \end{coro}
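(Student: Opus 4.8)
The plan is to follow the template established by Corollary~\ref{coro:incbir} for $\bP^2$, now one dimension higher. Fix $\bar G'=\mathrm{PSL}_2(\bF_7)$ or $\fA_6$ and let $G'\subset\GL_3$ be a lift. Choose a $3$-dimensional faithful representation $V$ of $G'$ together with a primitive character $\epsilon$ of $C_n$, set $V_\epsilon:=\epsilon\otimes V$, and consider the intransitive action on $\bP^3=\bP(1\oplus V_\epsilon)$. As recalled in the paragraph preceding the statement, one blows up the fixed point $(1:0:0:0)$ to remove its nonabelian stabilizer; on the resulting model there are two divisors whose generic stabilizer is the maximal subgroup $H\subseteq G$ acting by scalars on $V_\epsilon$, and they contribute
$$
(H,\bar G'\actsfromleft k(\bP(V)),(\chi_\epsilon))+(H,\bar G'\actsfromleft k(\bP(V)),(-\chi_\epsilon))
$$
to $[\bP^3\actsfromright G]$, where $\chi_\epsilon$ depends on $\epsilon$. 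The first step is to record these two symbols explicitly and to verify, exactly as in Section~\ref{sect:inc}, that they are incompressible: since $\bar G'$ is nonabelian and cannot act generically freely on $\bP^1$, no $\Theta_2$-term of a relation $\mathbf{(B)}$ can produce a divisorial symbol with field $k(\bP(V))$ and group $\bar G'$ acting this way (this is precisely Proposition~\ref{prop:cr}, or its analogue for $\fA_6$ over $\bP^1$).

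Next, I would invoke the classification input: by \cite[Theorem B.2]{Chelocal} (cf.\ Section~\ref{sect:dim2}), both $\mathrm{PSL}_2(\bF_7)$ and $\fA_6$ admit non-birational actions on $\bP^2$ — two and four, respectively. Pick two such non-birational $\bar G'$-actions, realized via faithful representations $V$ and $V'$ of $G'$. Then the induced divisorial symbols $(H,\bar G'\actsfromleft k(\bP(V)),(\pm\chi_\epsilon))$ and $(H,\bar G'\actsfromleft k(\bP(V')),(\pm\chi'_\epsilon))$ lie in $\Burn_3^{\mathrm{inc}}(G)$, where there are no blow-up relations; since $\bP(V)\not\sim_{\bar G'}\bP(V')$, these incompressible symbols are genuinely distinct generators. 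By the decomposition \eqref{eqn:deco} and Corollary~\ref{coro:incbir}, the difference of the two classes $[\bP^3\actsfromright G]$ has nonzero image in $\Burn_3^{\mathrm{inc}}(G)$, hence the two $G$-actions on $\bP^3$ are not equivariantly birational. This gives the statement for every $n\ge 2$, since the construction and the incompressibility argument make no use of $\varphi(n)\ge 3$.

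The one point that needs care — and I expect it to be the only real obstacle — is making sure the projection of the difference to $\Burn_3^{\mathrm{inc}}(G)$ really is nonzero rather than being cancelled by other incompressible divisorial symbols appearing in the two classes. Since $\bar G'$ is primitive on $\bP^2$, any other divisorial stratum of $\bP(1\oplus V_\epsilon)$ with a large field comes from a proper subspace arrangement, and the associated actions on function fields are products of (projectively) linear actions on lower-dimensional projective spaces by Corollary~\ref{coro:yacts}; these cannot match the symbol $(H,\bar G'\actsfromleft k(\bP(V)),(\pm\chi_\epsilon))$ because $\bar G'$ acts irreducibly. So the two distinguished incompressible symbols survive in $\Burn_3^{\mathrm{inc}}(G)$ without interference, and the argument of Corollary~\ref{coro:incbir} applies verbatim. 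One should also check that the two chosen $\bar G'$-actions on $\bP^2$ extend to faithful representations of the \emph{same} lift $G'\subset\GL_3$ with the \emph{same} scalar subgroup, so that $H$ and $\chi_\epsilon$ are comparable across the two constructions; this is a routine representation-theoretic bookkeeping, handled in the cited works \cite{CS-finite}, \cite{Chelocal}.
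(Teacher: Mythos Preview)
Your proposal is correct and follows essentially the same approach as the paper: the paper's proof is a one-line reference to Section~\ref{sect:dim2}, relying entirely on the setup in the paragraph immediately preceding the corollary (incompressibility of the two divisorial symbols via Proposition~\ref{prop:cr}, plus the fact that $\mathrm{PSL}_2(\bF_7)$ and $\fA_6$ admit non-birational actions on $\bP^2$), exactly as you outline. Your additional paragraphs about possible cancellation by other incompressible symbols and about matching the lift $G'$ go beyond what the paper spells out, but they do not change the argument.
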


\begin{proof}
As in Section \ref{sect:dim2}, these choices of $\bar G'$ give non-birational actions on $\bP^2.$
\end{proof}

\section{Automorphisms of quadrics}
\label{sect:quad}

There is an extensive literature on birationality of quadrics over nonclosed fields (see, e.g., \cite{totaro}); of course, this is only interesting in absence of $k$-rational points. One of the central problems there is the following.

\

\noindent
{\bf Zariski problem for quadrics:}
If two smooth quadrics of the same dimension,
over a nonclosed field, are stably birational then they are birational. 

\

This is known in dimensions $\le 7$. 
On the other hand, in the $G$-equivariant context, there are examples of stably equivariantly birational but not birational quadrics, already in dimension 2. 
Their equivariant geometry has been addressed in, 
e.g., \cite{isk-s3}, \cite{sari},  \cite[Section 7]{HT-torsor}.  
In particular, the quadric surface $Q=\bP^1\times \bP^1$ admits actions of $G=C_2\times \mathfrak D_n$, for odd $n$, which are not birational to linear actions but such that the $G$-action 
on $Q\times \bP^2$, with trivial $G$-action on the second factor, is birational to a linear action
\cite{lemire}, \cite{HT-torsor}. The existence of such stable birationalities makes the analysis of $\Burn_n^{\mathrm{rc}}(G)$, $n\ge 3$, challenging, as one has to account for all such possibilities.

We are not aware of a systematic study of  $G$-equivariant geometry of quadrics in higher dimensions. In particular, it would be interesting to study systematically constructions of $G$-equivariant (stable) birationalities to projective spaces which do not rely on existence of $G$-fixed points.

\

\noindent
{\bf Assumptions on fixed points:}
Projection from fixed points gives trivially linearizability of the action, thus we will assume that 
\begin{itemize}
\item $X^G=\emptyset$.
\end{itemize}
On the other hand, existence of fixed points is a birational invariant for actions of abelian groups, and linear actions of abelian groups have fixed points, thus we will assume that
\begin{itemize}
\item $X^H\neq \emptyset$, for all abelian $H\subset G$. 
\end{itemize}

In this section we consider the birational classification of automorphisms of quadrics from the perspective of Burnside groups. In particular, we focus on $G$-actions satisfying the assumptions above. 

\

\noindent
{\bf Conics:} 
Consider $X \subset \mathbb{P}^2$, given by
\begin{align*}
    \sum_{j=1}^3x_j^2=0, 
\end{align*}
with an action of a subgroup $G$ of the Weyl group $W(\mathsf D_3)=\fS_4$.
The group $W(\mathsf D_3)$ has 11 conjugacy classes of subgroups.
Only one satisfies the requirements (concerning fixed points), namely $\fS_3=\langle \sigma,\tau\rangle$, with $\tau^2=\sigma^3=1$, and the natural permutation action on the coordinates; this action is linearizable. 
We turn to quadric surfaces. 

\

\noindent
{\bf Abelian actions on $\bP^1\times \bP^1$:} 
Their birational classification is in \cite[Proposition 6.2.4] {blancthesis}. In \cite[Section 5.5]{HKTsmall} we noted that the following actions of $C_2^2$ on $\bP^1\times \bP^1$ are not distinguishable with the Burnside formalism: the product action has fixed points, while the diagonal action does not, thus the actions are not birational, but the projections of the classes to the nontrivial part of the Burnside group vanish. 

On the other hand, consider the following, nonlinearizable, actions of $C_2^3$ on $\bP^1\times \bP^1$: in the first case, via
$\mathfrak K_4=C_2^2$ on one factor and $C_2$ on the other factor, and in second case via
$\mathfrak K_4$ on both factors, together with a switch of the factors. 
In the first case, we record
$$
2(C_2, \mathfrak K_4\actsfromleft k(\bP^1), (1)),
$$
coming from the two fixed points on the second $\bP^1$, and  in 
the second case only {\em one} such class. 
Since this symbol is incompressible (see \cite[Proposition 3.6]{KT-vector}), we conclude that the two actions have different classes in the Burnside group.

\

\noindent
{\bf Nonabelian actions on $\bP^1\times \bP^1$:} 
A full list of such actions is given in \cite[Theorem 4.9]{DI}. Here we 
consider the quadric surface $Q$ given by
\begin{equation}
\label{eqn:q2}
\sum_{j=1}^4 x_j^2=0.    
\end{equation}
We focus on actions changing signs and permuting the variables. 
There are 2 conjugacy classes of such groups $G$ satisfying the assumptions on fixed points, namely:

\[
\begin{tikzpicture}[commutative diagrams/every diagram]
	\node (P1) at (2,0) {$\frak{S}_3$,};
	\node (P6) at (0,0)
	{$\mathfrak{D}_6$};
	\path[commutative diagrams/.cd, every arrow, every label]
	(P6) edge node {} (P1);
\end{tikzpicture}
\]

\noindent
where 
$$
\mathfrak D_6=C_2\times \fS_3 =\langle \iota, \sigma, \tau\rangle, \quad \tau^2=\sigma^3=1. 
$$ 
Here $\iota$ inverts the sign on $x_4$, $\fS_3=\langle \sigma,\tau\rangle$ acts via permutation of the first three coordinates, and 
the specialization is to $\fS_3=\langle \sigma, \iota\cdot\tau 
\rangle$.

The fixed-point free $\fS_3$-action is linearizable; it is birational to an action on $\bP(1\oplus V_2)$, where $V_2$ is the standard 2-dimensional representation of $\fS_3$; in particular, there is a fixed point on $\bP^2$.

On the other hand,  by \cite[Section 9]{lemire} (see also \cite[Section 6]{HT-torsor}), 
the $\mathfrak{D}_6$-action on $Q$ is not linearizable but stably linearizable. The proof of nonlinearizability in \cite{isk-s3} was based on classification of birational transformations (links) between rational surfaces. 
An alternative proof, using the 
Burnside group formalism, is in 
\cite[Section 7.6]{HKTsmall}; we 
give a similar argument in the following example.

\begin{exam}
\label{exam:qqq}
Let $G=C_2^2\times \fS_3$. We analyze whether or not the symbol
$$
\bar{\mathfrak s}=(C_2, C_2\times \fS_3 \actsfromleft \bar{K}, (1)) \in \Burn_3(G), \quad \bar{K}=k(Q),
$$
is incompressible. 
There is a candidate symbol
$$
\mathfrak s=(C_2^2, \fS_3\actsfromleft K, (e_1,e_2)),
$$
that could lead to the given $\bar{\mathfrak s}$ via the blowup relation 
$\textbf{(B)}$. Here $e_1,e_2$ are nontrivial distinct characters of $C_2^2$.

Let us specify the action of 
$\bar{Y}=C_2\times \fS_3$ on $\bar{K}=k(Q)$, with $Q$ the quadric surface in \eqref{eqn:q2}:
$C_2$ switches the sign on $x_4$ and $\fS_3$ permutes the first three coordinates.

Since $Q$ is rational, we must have 
$K=k(\bP^1)$. The Action construction produces $\Theta_2$-terms where the $\bar{Y}$-action is birational to an action on a Hirzebruch surface $F$, 
a projectivization of a rank-2 vector bundle on $\bP^1$, either with trivial action or a $C_2$-action on the generic fiber. 

In the first case, such an action is birational to an action on $\bP^1\times \bP^1$, with 
$C_2\times \fS_3 = \fD_6$ acting on one of the factors, and trivial action on the second factor. This action 
has no fixed points upon restriction to $C_2\times \fS_2\subset C_2\times \fS_3$, which is not the case for the $\bar{Y}$ action on $Q$. Thus the actions are not birational.

In the second case, we compare the classes in $\Burn_2(\bar{Y})$, for the actions on $Q$ and on $F$. We find {\em one} incompressible symbol 
$$
(C_2, \fS_3\actsfromleft k(\bP^1), (1)) \in \Burn_2(C_2\times \fS_3)
$$
in the class $[Q\actsfromright \bar{Y}]$, 
and {\em two} such symbols, corresponding to the two sections fixed by $C_2$, 
in the class $[F\actsfromright \bar{Y}]$ (see Section~\ref{sect:inc}). Thus the actions are not birational and $\bar{\mathfrak s}$ is incompressible.
\end{exam}

\

\noindent
{\bf Quadric threefolds:} We consider first $X\subset \bP^4$ given by
 $   \sum_{j=1}^{5}x_j^2=0$,
with a natural action of the Weyl group $W(\mathsf D_5)$. This group has 197 conjugacy  classes of subgroups, examined in \cite[Section 5]{Kun} in connection with the analysis of possible Galois actions (or automorphisms) on Picard groups of Del Pezzo surfaces of degree 4; the goal there was to identify potentially rational surfaces  over nonclosed fields (see also \cite{TY}). 
There are  112 (conjugacy classes of) 
subgroups  $G\subset W(\mathsf D_5)$ which
give rise to fixed-point free actions. 

We focus on the linearizability problem. 
Note that the $\textbf{(RY)}$ invariant (see Section~\ref{sect:gen}) does not provide any information: $W(\mathsf D_5)$ does not contain abelian subgroups of rank 3 that could give a nontrivial obstruction.

We obtain 33 $W(\mathsf D_5)$-conjugacy classes of subgroups satisfying our assumptions on fixed points; several of these are conjugated in $\PGL_5$. 
We list the remaining cases:

\

{\tiny
\[
\begin{tikzpicture}[commutative diagrams/every diagram]
	\node (P1) at (-1,-1) {$\mathfrak{D}_5$};
	\node (P15) at (-2,0)
	{$\mathfrak{F}_5$};
	\node (P29) at (0,0)
	{$\mathfrak{A}_5$};
	\node (P33) at (-1,1)
	{$\mathfrak{S}_5$};
	\node (P4) at (2,-1)
	{$\mathfrak{D}_4$};
	\node (P20) at (2,1)
	{$\mathfrak{S}_4$};
	\path[commutative diagrams/every arrow, every label]
	(P33) edge node {} (P15)
	(P33) edge node {} (P29)
	(P15) edge node {} (P1)
	(P29) edge node {} (P1)
	(P20) edge node {} (P4);
\end{tikzpicture}
\]
}

{\tiny
\[
\begin{tikzpicture}[commutative diagrams/every diagram]
	\node (P3) at (1,-2)
        {$\mathfrak{D}_4'$};
	\node (P7) at (3.5,-2)
	{$\mathfrak{D}_4''$};
	\node (P2) at (5,-2)
	{$Q_8$};
	\node (P19) at (0,0)
	{$\mathfrak{D}_8$};
	\node (P16) at (1.5,0)
	{$\mathfrak{D}_4:C_2$};
	\node (P18) at (3,0)
	{$SD16$};
	\node (P24) at (5,0)
	{$\SL(2,3)$};
	\node (P17) at (0,1)
	{$OD16$};
	\node (P30) at (2,2)
	{$C_4 wr C_2$};
	\node (P32) at (5,2)
	{$\GL(2,3)$};
	
	\node (P11) at (7,-2)
	{$\mathfrak{D}_6$};
	\node (P12) at (9,-2)
	{$\mathfrak{D}_6'$};
	\node (P25) at (7,0)
	{{\color{red}$C_2^2\times\mathfrak{S}_3$}};
	\node (P27) at (8.5,0)
	{$C_3:\mathfrak{D}_4$};
	\node (P26) at (10,0)
	{$\mathfrak{D}_{12}$};
	\node (P31) at (8,2)
	{{\color{red}$\mathfrak{S}_3 \times \mathfrak{D}_4$}};
	\path[commutative diagrams/every arrow, every label]
        (P3) edge [above] node {$\sim$} (P7)
	(P30) edge node {} (P17)
	(P30) edge node {} (P16)
	(P32) edge node {} (P18)
	(P32) edge node {} (P24)
	(P32) edge node {} (P11)
	(P19) edge node {} (P3)
	(P19) edge node {} (P7)
	(P16) edge node {} (P3)
	(P16) edge node {} (P2)
	(P18) edge node {} (P7)
	(P18) edge node {} (P2)
	(P24) edge node {} (P2)
	(P31) edge node {} (P25)
	(P31) edge node {} (P27)
	(P31) edge node {} (P26)
	(P25) edge node {} (P11)
    (P25) edge node {} (P12)
	(P27) edge node {} (P12)
	(P26) edge node {} (P12);
	
\end{tikzpicture}
\]
}

\

\begin{exam}
\label{exam:quad}
We consider $G=C_2^2\times \fS_3\subset W(\mathsf D_5)$. The action is realized via involutions $c_4$ and $c_5$ switching signs on $x_4$ and $x_5$, and the permutation action by $\fS_3$ on the remaining variables $x_1,\ldots, x_3$. 

This contributes the symbol 
$$
\bar{\mathfrak s}:= (\bar{H}, \bar{Y} \actsfromleft k(Q), (1)) \in \Burn_3(G),
$$
to the class $[X\actsfromright G]$; 
here $\bar{H}:=\langle c_5\rangle$, and 
$\bar{Y}:=\langle c_4, \fS_3\rangle \simeq C_2\times \fS_3$ is acting on the quadric surface $Q\subset \bP^3$, given by
\begin{align}
\label{eqn:quad-surf}
    \sum_{i=1}^4x_i^2=0. 
\end{align}
We claim that 
\begin{itemize}
\item[(1)] $\bar{\mathfrak s}$ is an incompressible divisorial symbol in $\Burn_3(G)$,
\item[(2)] the $\bar{Y}$ action on $Q$ is not birational to a (projectively) linear action, or products of such actions. 
\end{itemize}

We have addressed (1) in Example~\ref{exam:qqq}. 
The same argument shows 
that the $\bar{Y}$-action on 
$Q$ is not (projectively) linearizable.  
Note also that in this case, we do not need to pass to a standard model $\tilde{X}$ for the $G$-action. Indeed, 
when the class is computed on $\tilde{X}$, it will be a sum 
of various classes,
with {\em positive} coefficients, 
and the incompressible class $\bar{\mathfrak s}$ will be among them.
Since symbols $\bar{\mathfrak s}$ are not produced by the algorithm in Section~\ref{sect:com-lin} and since $\bar{\mathfrak s}$ is incompressible, we conclude that the $G$-action on $X$ is not (projectively) linearizable. 

This $G$ is contained in 
$\fS_3\times \fD_4$, so that the corresponding action on $X$ 
is therefore also not (projectively) linearizable. 
\end{exam}

\begin{exam}
Consider the quadric threefold $X$ given by
$$
\sum_{i=1}^6x_i^2 = \sum_{i=1}^6x_i =0.
$$
It carries a natural action of $\fS_6$, by permutation of the coordinates as well as the induced action of $\fA_6$. By \cite[Theorem 6.2]{ChS-5}, the $\fA_6$-action is super-rigid, in particular, it is not equivariantly birational to a projectively linear action. 

Here we give an alternative argument, based on the Burnside formalism. First we treat $G=\fS_6$. The involution $x_5\leftrightarrow x_6$ fixes a quadric surface $Q$ with residual $\fS_4$-action. We have: 
\begin{itemize}
\item The corresponding symbol
$$
\bar{\mathfrak s} :=(C_2, \fS_4\actsfromleft k(Q), (1))
$$
is incompressible. Indeed, symbols appearing in the $\Theta_2$-term 
actions on the projectivization of a rank-2 vector bundle over $\bP^1$. Since $\fS_4$ does not have normal cyclic subgroups, it has to act trivially on the fibers, and generically freely on the base $\bP^1$. 
In particular, any $\fK_4 \subset \fS_4$ would act without fixed points. 
On the other hand, the $\fK_4$-action on $Q$, generated by the transpositions $(1,2)$ and $(3,4)$, switching $x_1,x_2$ and $x_3,x_4$, respectively, 
fixes two points. 
This implies that 
$\bar{\mathfrak s}$ is incompressible.
\item 
There are two {\em projectively linear} $\fS_6$-actions on $\bP^3$, with 
Burnside classes presented in Section~\ref{sect:3}. 
The symbol $\bar{\mathfrak s}$ does not appear in these expressions. 
\end{itemize}
We conclude that the $\fS_6$-action on $X$ is not birational to a projectively linear action on $\bP^3$. 

Now we give a different argument, for $G:=\fA_6$, and by extension $\fS_6$. Here, we base the argument on computations in 
\begin{align*}
    \BC_3(\fA_6)=\bZ/2 \oplus \bZ. 
\end{align*}
We analyze the fixed loci for (conjugacy  classes of) subgroups $H\subset G$: 

\

\begin{center}
\begin{tabular}{|c|c|c|}
    \hline
     stabilizer $H$& $Z_G(H)$ &orbit representatives of fixed loci of $H$ \\
     \hline
     $\fA_4$ & $1$ &one point\\
    \hline
     $\fA_4'$&$1$&one point\\
     \hline
    $\fS_3$&$1$&two points    \\
    \hline
    $C_3^2$&$C_3^2$&one point\\
    \hline
    $C_5$&$C_5$&two points\\
    \hline
    $C_4$&$C_4$&two points\\
    \hline
    $C_3$&$C_3^2$&one conic\\
    \hline
    $C_3'$&$C_3^2$&one line\\
    \hline
     $C_2$&$\fD_4$&one conic\\
    \hline
\end{tabular}   
\end{center}

\

Note that all symbols in  $\BC_3(\fA_6)$ with stabilizer not equal to 
$H:=C_3^2$ are trivial. The group $H=\langle (1,2,3),(4,5,6)\rangle$ has four fixed points, contained in the $G$-orbit of
$$
\mp=(0:0:0:1:\zeta_3:\zeta_3^2).
$$
The $G$-action is not in standard form; however, since 
$H=C_3^2$ is maximal, in the poset of groups with nontrivial fixed loci, symbols with this stabilizer on a standard form can only arise from blowing up these fixed points. Relation $(\mathbf{B})$ implies that contributions from $H$-fixed points on the blowup equal to those on $X$. 
Thus 
$$
[X\actsfromright G]=(H, 1, ((0,2), (1,2),(2,2))) \in \BC_3(G),
$$
which vanishes, by relation $(\mathbf{V})$. On the other hand, the classes of projectively linear actions of $G$ do not vanish in $\BC_3(G)$, see Section~\ref{sect:3}.
\end{exam}

\bibliographystyle{plain}
\bibliography{gquadric}

\begin{thebibliography}{10}

\bibitem{blancthesis}
J.~Blanc.
\newblock {\em Finite abelian subgroups of the {C}remona group of the plane}.
\newblock PhD thesis, Universit\'e de Gen\`eve, 2006.
\newblock Th\`ese no. 3777, {\tt arXiv:0610368}.

\bibitem{blancfinite}
J.~Blanc, I.~Cheltsov, A.~Duncan, and Yu. Prokhorov.
\newblock Finite quasisimple groups acting on rationally connected threefolds,
  2018.
\newblock {\tt arXiv:1809.09226}.

\bibitem{BogPro}
F.~Bogomolov and Yu. Prokhorov.
\newblock On stable conjugacy of finite subgroups of the plane {C}remona group,
  {I}.
\newblock {\em Cent. Eur. J. Math.}, 11(12):2099--2105, 2013.

\bibitem{Chelocal}
I.~Cheltsov.
\newblock Two local inequalities.
\newblock {\em Izv. Ross. Akad. Nauk Ser. Mat.}, 78(2):167--224, 2014.

\bibitem{CDK}
I.~Cheltsov, A.~Dubouloz, and T.~Kishimoto.
\newblock Toric {G}-solid {F}ano threefolds, 2020.
\newblock {\tt arXiv:2007.14197}.

\bibitem{CSar}
I.~Cheltsov and A.~Sarikyan.
\newblock Equivariant pliability of the projective space, 2022.
\newblock {\tt arXiv:2202.09319}.

\bibitem{CheShr}
I.~Cheltsov and C.~Shramov.
\newblock Three embeddings of the {K}lein simple group into the {C}remona group
  of rank three.
\newblock {\em Transform. Groups}, 17(2):303--350, 2012.

\bibitem{ChS-5}
I.~Cheltsov and C.~Shramov.
\newblock Five embeddings of one simple group.
\newblock {\em Trans. Amer. Math. Soc.}, 366(3):1289--1331, 2014.

\bibitem{CS}
I.~Cheltsov and C.~Shramov.
\newblock {\em Cremona groups and the icosahedron}.
\newblock Monographs and Research Notes in Mathematics. CRC Press, Boca Raton,
  FL, 2016.

\bibitem{CS-finite}
I.~Cheltsov and C.~Shramov.
\newblock Finite collineation groups and birational rigidity.
\newblock {\em Selecta Math. (N.S.)}, 25(5):Paper No. 71, 68, 2019.

\bibitem{DI}
I.~V. Dolgachev and V.~A. Iskovskikh.
\newblock Finite subgroups of the plane {C}remona group.
\newblock In {\em Algebra, arithmetic, and geometry: in honor of {Y}u. {I}.
  {M}anin. {V}ol. {I}}, volume 269 of {\em Progr. Math.}, pages 443--548.
  Birkh\"{a}user Boston, Boston, MA, 2009.

\bibitem{HKTsmall}
B.~Hassett, A.~Kresch, and Yu. Tschinkel.
\newblock Symbols and equivariant birational geometry in small dimensions.
\newblock In {\em Rationality of varieties}, volume 342 of {\em Progr. Math.},
  pages 201--236. Birkh\"{a}user/Springer, Cham, [2021] \copyright 2021.

\bibitem{HT-intersect}
B.~Hassett and Yu. Tschinkel.
\newblock Equivariant geometry of odd-dimensional complete intersections of two
  quadrics.
\newblock {\em Pure Appl. Math. Q.}, 18(4):1555--1597, 2022.

\bibitem{HT-torsor}
B.~Hassett and Yu. Tschinkel.
\newblock Torsors and stable equivariant birational geometry, 2022.
\newblock {\tt arXiv:2204.03106}.

\bibitem{isk-s3}
V.~A. Iskovskikh.
\newblock Two non-conjugate embeddings of {$S_3\times Z_2$} into the {C}remona
  group. {II}.
\newblock In {\em Algebraic geometry in {E}ast {A}sia---{H}anoi 2005},
  volume~50 of {\em Adv. Stud. Pure Math.}, pages 251--267. Math. Soc. Japan,
  Tokyo, 2008.

\bibitem{Ko}
J.~Koll\'{a}r.
\newblock {\em Rational curves on algebraic varieties}, volume~32 of {\em
  Ergebnisse der Mathematik und ihrer Grenzgebiete. 3. Folge}.
\newblock Springer-Verlag, Berlin, 1996.

\bibitem{KPT}
M.~Kontsevich, V.~Pestun, and Yu. Tschinkel.
\newblock Equivariant birational geometry and modular symbols, 2019.
\newblock {\tt arXiv:1902.09894}, to appear in J. Eur. Math. Soc.

\bibitem{KT-arith}
A.~Kresch and Yu. Tschinkel.
\newblock Arithmetic properties of equivariant birational types, 2021.
\newblock to appear in {\it Research in Number Theory}.

\bibitem{Burntoric}
A.~Kresch and Yu. Tschinkel.
\newblock Equivariant {B}urnside groups and toric varieties, 2021.
\newblock {\tt arXiv:2112.05123}.

\bibitem{KT-struct}
A.~Kresch and Yu. Tschinkel.
\newblock Equivariant {B}urnside groups: structure and operations, 2021.
\newblock {\tt arXiv:2105.02929}.

\bibitem{BnG}
A.~Kresch and Yu. Tschinkel.
\newblock Equivariant birational types and {B}urnside volume.
\newblock {\em Ann. Sc. Norm. Super. Pisa Cl. Sci. (5)}, 23(2):1013--1052,
  2022.

\bibitem{KT-vector}
A.~Kresch and Yu. Tschinkel.
\newblock Equivariant {B}urnside groups and representation theory.
\newblock {\em Selecta Math. (N.S.)}, 28(4):Paper No. 81, 39, 2022.

\bibitem{Kun}
B.~\`E. Kunyavski\u\i.
\newblock Three-dimensional algebraic tori.
\newblock In {\em Investigations in number theory ({R}ussian)}, pages 90--111.
  Saratov. Gos. Univ., Saratov, 1987.
\newblock Translated in Selecta Math. Soviet. {{\bf{9}}} (1990), no. 1, 1--21.

\bibitem{lemire}
N.~Lemire, V.~L. Popov, and Z.~Reichstein.
\newblock Cayley groups.
\newblock {\em J. Amer. Math. Soc.}, 19(4):921--967, 2006.

\bibitem{Pro-inv}
Yu. Prokhorov.
\newblock On birational involutions of {$\Bbb P^3$}.
\newblock {\em Izv. Ross. Akad. Nauk Ser. Mat.}, 77(3):199--222, 2013.

\bibitem{Pro-ICM}
Yu. Prokhorov.
\newblock Finite groups of birational transformations, 2021.
\newblock {\tt arXiv:2108.13325}.

\bibitem{reichsteinyoussininvariant}
Z.~Reichstein and B.~Youssin.
\newblock A birational invariant for algebraic group actions.
\newblock {\em Pacific J. Math.}, 204(1):223--246, 2002.

\bibitem{sako}
D.~Sakovics.
\newblock {$G$}-birational rigidity of the projective plane.
\newblock {\em Eur. J. Math.}, 5(3):1090--1105, 2019.

\bibitem{sari}
A.~Sarikyan.
\newblock On linearization problems in the plane {C}remona group, 2020.
\newblock {\tt arXiv:2009.05761}.

\bibitem{totaro}
B.~Totaro.
\newblock Birational geometry of quadrics.
\newblock {\em Bull. Soc. Math. France}, 137(2):253--276, 2009.

\bibitem{TY}
Yuri Tschinkel and Kaiqi Yang.
\newblock Potentially stably rational del {P}ezzo surfaces over nonclosed
  fields.
\newblock In {\em Combinatorial and additive number theory. {III}}, volume 297
  of {\em Springer Proc. Math. Stat.}, pages 227--233. Springer, Cham, 2020.

\bibitem{TYZ-web}
Yuri Tschinkel, Kaiqi Yang, and Zhijia Zhang, 2022.
\newblock {\tt https://kaiqi-yang1994.github.io/projects/DCPonProj }.

\bibitem{TYZ}
Yuri Tschinkel, Kaiqi Yang, and Zhijia Zhang.
\newblock Combinatorial {B}urnside groups.
\newblock {\em Res. Number Theory}, 8(2):Paper No. 33, 2022.

\end{thebibliography}
\end{document}